\newtheorem{dummy}{}[section]
\newtheorem{theorem}[dummy]{Theorem}
\newtheorem{proposition}[dummy]{Proposition}
\newtheorem{lemma}[dummy]{Lemma}
\newtheorem{corollary}[dummy]{Corollary}
\theoremstyle{definition}
\newtheorem{definition}[dummy]{Definition}
\newtheorem{remark}[dummy]{Remark}
\newcommand{\Z}{\ensuremath{\mathbb{Z}}}
\newcommand{\Q}{\ensuremath{\mathbb{Q}}}
\newcommand{\C}{\ensuremath{\mathbb{C}}}
\renewcommand{\P}{\ensuremath{\mathbb{P}}}
\newcommand{\M}{\ensuremath{\overline{\mathcal{M}}}}
\renewcommand{\O}{\ensuremath{\mathcal{O}}}
\newcommand{\ev}{\mathrm{ev}}
\newcommand{\vir}{\mathrm{vir}}
\newcommand{\one}{\ensuremath{\mathbf{1}}}
\newcommand{\bbeta}{\ensuremath{\vec{\beta}}}
\newcommand{\GIT}{\ensuremath{\mathbin{/\mkern-6mu/}}}
\renewcommand{\H}{\ensuremath{\mathcal{H}}}
\newcommand{\mult}{\ensuremath{\textrm{mult}}}
\renewcommand{\i}{\ensuremath{\textrm{i}}}
\newcommand{\ct}{\mathrm{ct}}
\begin{document}

\title{Wall-crossing in genus-zero hybrid theory}

\begin{abstract}
The hybrid model is the Landau--Ginzburg-type theory that is expected, via the Landau-Ginzburg/Calabi-Yau correspondence, to match the Gromov--Witten theory of a complete intersection in weighted projective space.  We prove a wall-crossing formula exhibiting the dependence of the genus-zero hybrid model on its stability parameter, generalizing the work of \cite{RR} for quantum singularity theory and paralleling the work of Ciocan-Fontanine--Kim \cite{CFKZero} for quasimaps.  This completes the proof of the genus-zero Landau-Ginzburg/Calabi-Yau correspondence for compete intersections of hypersurfaces of the same degree, as well as the proof of the all-genus hybrid wall-crossing \cite{CJRGLSM}.
\end{abstract}

\author[E.~Clader]{Emily Clader}
\address{Department of Mathematics, San Francisco State University, 1600 Holloway Avenue, San Francisco, CA 94132, USA}
\email{eclader@sfsu.edu}

\author[D.~Ross]{Dustin Ross}
\address{Department of Mathematics, San Francisco State University, 1600 Holloway Avenue, San Francisco, CA 94132, USA}
\email{rossd@sfsu.edu}

\maketitle

\section{Introduction}

The gauged linear sigma model (GLSM) has been the subject of intense study by both mathematicians and physicists since its introduction by Witten \cite{Witten} in the 1990s \cite{FJRGLSM, CJRGLSM, HIV, HV}.  Special cases of the GLSM include the Gromov--Witten theory---or, more generally, the quasimap theory---of nonsingular complete intersections in GIT quotients, as well as the Fan--Jarvis--Ruan--Witten (FJRW) theory of nondegenerate singularities.  In particular, the GLSM provides an ideal context in which to understand the Landau--Ginzburg/Calabi--Yau (LG/CY) correspondence relating the Gromov--Witten theory of a nonsingular hypersurface in weighted projective space to the FJRW theory of its defining polynomial; the relationship between these two theories, from the GLSM perspective, is encoded in a variation of GIT on the target geometry.

More precisely, the GLSM depends on the choice of a GIT quotient $X_{\theta} = [V \GIT_{\!\theta} G]$ equipped with a polynomial function $W: X_{\theta} \rightarrow \C$, and a stability parameter $\epsilon \in \Q^+$.  Suppose we take the GIT quotient to be
\[X_+ := \O_{\P(w_1, \ldots, w_M)}(-d) = (\C^M \times \C) \GIT_{\!\theta} \C^*,\]
where $\C^*$ acts with weights $(w_1, \ldots, w_M, -d)$ and $\theta \in \text{Hom}_{\Z}(\C^*,\C^*) \cong \Z$ is any positive character, and let
\[W(x_1, \ldots, x_M, p) = pF(x_1, \ldots, x_M)\]
for a nondegenerate quasihomogeneous polynomial $F \in \C[x_1, \ldots, x_M]$ of weights $w_1, \ldots, w_M$ and degree $d$.  Then the GLSM recovers the Gromov--Witten theory of the hypersurface $\{F = 0\} \subseteq \P(w_1, \ldots, w_M)$ when $\epsilon \gg 0$, while for smaller $\epsilon$ it coincides with the quasimap theory developed by Ciocan-Fontanine--Kim--Maulik \cite{CFKM, CFK3, CFKZero, CFKHigher}.  The passage from $\epsilon \gg 0$ to the asymptotic stability condition $\epsilon = 0+$ can be viewed as a manifestation of mirror symmetry; in particular, a generating function of genus-zero invariants for $\epsilon = 0+$ is precisely Givental's $I$-function.  Ciocan-Fontanine and Kim gave a new proof of the genus-zero mirror theorem \cite{CFKZero} by demonstrating a strikingly simple wall-crossing formula that encodes how the genus-zero quasimap invariants change with $\epsilon$.

On the other hand, taking a negative character of $\C^*$ in the above quotient yields
\[X_- := [\C^M/\Z_d],\]
where $\Z_d$ acts diagonally with weights $(w_1, \ldots, w_M)$.  The resulting GLSM is the FJRW theory of the polynomial $F$ when $\epsilon \gg 0$, and for smaller $\epsilon$ it recovers the quantum singularity theory studied by Ruan and the second author in \cite{RR}.  The analogous analysis to the above was carried out in this chamber in \cite{RR}, yielding genus-zero wall-crossing formulas for the dependence of the theory on $\epsilon$ and a new proof of the genus-zero Landau--Ginzburg mirror theorem.  From here, the genus-zero LG/CY correspondence follows by relating the $I$-functions of Gromov--Witten and FJRW theory, a rather delicate process involving analytic continuation that was proven by Chiodo--Iritani--Ruan \cite{CR, CIR}.

Two natural questions arise from this perspective on the LG/CY correspondence.  First, can it be adapted to gauged linear sigma models associated to other GIT quotients?  And second, can it be generalized to higher genus?  

In particular, replacing the hypersurface $\{F= 0\} \subseteq \P(w_1, \ldots, w_M)$ with a nonsingular complete intersection $Y = \{F_1 = \cdots = F_N = 0\}$ of degrees $d_1, \ldots, d_N$ corresponds to considering a GIT quotient
\[(\C^M \times \C^N) \GIT_{\!\theta} \C^*,\]
in which $\C^*$ acts with weights $(w_1, \ldots, w_M, -d_1, \ldots, -d_N)$.  The GLSM associated to this quotient with a positive character coincides with the Gromov--Witten (or quasimap) theory of $Y$.  In order to ensure the properness of the GLSM moduli space in the negative chamber, however, one must assume that $d_1 = \cdots = d_N$, as this implies that the theory admits a ``good lift" \cite{FJRGLSM}.  Under this assumption, the GLSM for a negative character is known in the physics literature as the ``hybrid model" and was studied mathematically by the first author in \cite{Clader}.   It is a curve-counting theory over a moduli space $Z^{\epsilon}_{g,n,\beta}$ parameterizing genus-$g$ marked orbifold curves $(C; q_1, \ldots, q_n)$ together with a degree-$\beta$ line bundle $L$ and a section
\[\vec{p} \in \Gamma \left((L^{\otimes - d} \otimes \omega_{\log})^{\oplus N}\right)\]
with vanishing order at most $1/\epsilon$.

The genus-zero wall-crossing for the quasimap theory of $Y$ was carried out by Ciocan-Fontanine--Kim in \cite{CFKZero}, while the analytic continuation relating $\epsilon = 0+$ quasimap theory to $\epsilon = 0+$ hybrid theory was done---under a Calabi--Yau hypothesis---in our previous work \cite{ClaRo}.  The first theorem of the current paper, which states the genus-zero wall-crossing for hybrid theory, is the natural conclusion of that story:

\begin{theorem}
\label{thm:1}
Let $Y \subseteq \P(w_1, \ldots, w_M)$ be a nonsingular complete intersection defined by the vanishing of a collection of polynomials of degree $d$, where $w_i|d$ for all $i$. The $J$-functions of $\epsilon$-stable and $\infty$-stable hybrid theory are related by
\[J^{\epsilon}(q,z) = J^{\infty}(q,z \mathbf{\one} + [J^{\epsilon}]_+(q,-z),z),\]
where $[J^{\epsilon}]_+$ is the part of $J^{\epsilon}$ with non-negative powers of $z$.
\end{theorem}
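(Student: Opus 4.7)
The plan is to adapt the graph space localization strategy of Ciocan--Fontanine--Kim \cite{CFKZero} and Ross--Ruan \cite{RR} to the hybrid setting. The overall structure is to introduce a moduli space of $\epsilon$-stable hybrid objects equipped with an auxiliary degree-one parametrization to $\P^1$, apply virtual torus localization, and read off the claimed identity from the resulting fixed-locus decomposition. The input is a formal variable $z$ that will be identified with the equivariant parameter of a natural $\C^*$-action, so the wall-crossing formula emerges from equating two expressions for the same equivariant integral.

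First I would introduce an $\epsilon$-stable hybrid graph space $ZG^{\epsilon}_{0,n,\beta}$ parametrizing $\epsilon$-stable hybrid data $(C,q_1,\ldots,q_n,L,\vec{p})$ together with a degree-one regular map $f:C\to\P^1$, satisfying the appropriate stability and rigidification. The natural $\C^*$-action on $\P^1$ fixing $0$ and $\infty$ induces a $\C^*$-action on $ZG^{\epsilon}_{0,n,\beta}$ and on its perfect obstruction theory, so that Graber--Pandharipande virtual localization is available. I would then identify the $\C^*$-fixed loci: as in the quasimap case, they are indexed by decompositions of $(n,\beta)$ into data over $0\in\P^1$ and $\infty\in\P^1$, with a rational backbone mapping isomorphically to $\P^1$ and possibly contracted hybrid sub-curves bubbling over the two fixed points. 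The contracted sub-curve over $0$ inherits $\epsilon$-stability, while the sub-curve over $\infty$ acquires $\infty$-stability because all hybrid data there must extend across the marked parametrizing node. Matching these fixed loci with products of the moduli spaces $Z^{\epsilon}_{0,n_0,\beta_0}$ and $Z^{\infty}_{0,n_\infty,\beta_\infty}$, and computing the induced virtual classes and normal bundles, is the central geometric step.

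Third, I would pair the virtual class of $ZG^{\epsilon}_{0,n,\beta}$ against the appropriate equivariant insertions and evaluate the localization formula in two different ways. The contribution from the distinguished fixed locus where all marked points and all positive degree concentrate at $0\in\P^1$ reproduces (after identifying $z$ with the equivariant parameter) the left-hand side $J^{\epsilon}(q,z)$. The contributions from the remaining fixed loci, in which a nontrivial $\infty$-stable bubble sits over $\infty$, reassemble into an $\infty$-stable $J$-function whose single distinguished insertion is built from residues of $J^{\epsilon}$ at the parametrizing node; a standard combinatorial argument identifies this insertion with exactly $z\mathbf{\one} + [J^{\epsilon}]_+(q,-z)$, yielding the right-hand side $J^{\infty}(q,z\mathbf{\one}+[J^{\epsilon}]_+(q,-z),z)$. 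Equating the two expressions produces the wall-crossing formula.

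The hard part will be the careful analysis of the virtual class on the fixed loci. Unlike in the quasimap setting, the obstruction theory here involves the $\vec{p}$-field sections of $(L^{\otimes-d}\otimes\omega_{\log})^{\oplus N}$, and under the good-lift assumption $d_1=\cdots=d_N$ these sections must be tracked across the parametrizing nodes of the fixed loci to confirm that the $\infty$-stable virtual class (as opposed to some over-compactified variant) appears on the right. A related subtlety is verifying that the virtual normal bundles on each fixed locus contribute exactly the expected factors of $z$ and sign changes; it is this bookkeeping that turns the naive localization sum into the specific substitution $z\mathbf{\one}+[J^{\epsilon}]_+(q,-z)$, and any error in handling the $\vec{p}$-contributions would produce the wrong argument on the right-hand side.
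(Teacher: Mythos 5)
Your overall instinct---localization on a graph space with a parametrized $\P^1$---is one of the ingredients the paper uses, but your central geometric step fails. The $\C^*$-fixed loci of an $\epsilon$-stable graph space carry $\epsilon$-stable data over \emph{both} $0$ and $\infty$: the stability condition and the bound $\mathrm{ord}_q(\vec{p})\leq 1/\epsilon$ are imposed globally and do not bifurcate at the two poles of the parametrized component. Your justification that the sub-curve over $\infty$ ``acquires $\infty$-stability because all hybrid data there must extend across the parametrizing node'' only rules out a basepoint \emph{at} the node itself; basepoints of order up to $1/\epsilon$ and contracted rational tails can still occur elsewhere over $\infty$. Consequently a single localization on your $ZG^{\epsilon}_{0,n,\beta}$ never produces the $\infty$-stable virtual class, and the wall-crossing identity cannot be read off from one fixed-point decomposition. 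There is also a second omission: the hybrid target is $\P(d,\ldots,d)\cong\P^{N-1}$ rather than a point, and your argument never explains how the $\widetilde{\H}^{\mathrm{ct}}$-valued structure of the $J$-function is handled.

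The paper's actual route supplies both missing pieces. First, it localizes with respect to the target torus $\mathbf{T}=(\C^*)^N$ scaling the $p$-coordinates; the fixed loci are indexed by decorated trees whose vertices are moduli of weighted spin curves $\M^{1/d,\epsilon}_{0,\mathrm{val}(v),\beta_v}$, reducing everything to point-target ``vertex'' theories at the fixed points $P_j$. Second, at each vertex it proves a wall-crossing statement (Theorem~\ref{thm:vertexWC}) not by a single localization but by an inductive comparison of the $\epsilon$- and $\infty$-graph spaces: one pushes forward the class of a substack $\Theta$ (resp.\ $\Omega$) where the distinguished marked point sits over $\infty$ and a basepoint (resp.\ one of the extra marked points) sits over $0$, uses regularity at $z=0$ of the pushforward to determine the principal part of the $\Theta_\infty$ (resp.\ $\Omega_\infty$) contribution from the remaining fixed loci, and matches the two recursions by induction on $(\beta,n)$. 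Theorem~\ref{thm:1} then follows by characterizing the set of points $\iota_*J^{\infty}(q,\mathbf{t},-z)$ in the style of Brown and Coates--Corti--Iritani--Tseng---via the pole structure in $z$, a residue recursion at $z=(\alpha_j-\alpha_{j'})/\beta$, and the condition that the Laurent expansion at $z=0$ of each restriction lies on the vertex cone---and verifying these three conditions for $\iota_*J^{\epsilon}(q,-z)$. To repair your proposal you would need to add either the inductive two-graph-space comparison or this cone characterization; the direct reading of the formula from one localization is not available.
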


See Section~\ref{sec:J} below for the precise definitions of the $J$-functions, which are generating functions of $\epsilon$-stable hybrid invariants.  In particular, the conjunction of \cite{CFKZero}, \cite{ClaRo}, and Theorem~\ref{thm:1} verifies the genus-zero Landau--Ginzburg/Calabi--Yau correspondence for all nonsingular Calabi--Yau complete intersections $Y \subseteq \P(w_1, \ldots, w_M)$ such that $w_i|d$ for all $i$.

We also extend the methods of \cite{RR} to prove a stronger wall-crossing statement, on the level not only of invariants but of virtual fundamental classes.  The statement involves comparison maps
\[c: Z^{\infty}_{0,n,\beta} \rightarrow Z^{\epsilon}_{0,n,\beta}\]
and
\[b_{\vec{\beta}} = b_{(\beta_1, \ldots, \beta_k)}: Z^{\epsilon}_{0,n+k,\beta_0} \rightarrow Z^{\epsilon}_{0,n, \beta_0 + \sum_i \beta_i}\]
whose definitions appear in Section~\ref{subsec:bc}.  The theorem, with this notation established, is the following:

\begin{theorem}
\label{thm:2}
Let $Y \subseteq \P(w_1, \ldots, w_M)$ be as in Theorem~\ref{thm:1}.  Then
\begin{equation}
\label{eq:mainWC}
\sum_{\beta} q^\beta  [Z^{\epsilon}_{0,n,\beta}]^{\vir} =\sum_{\beta_0, \beta_1, \ldots, \beta_k} \frac{q^{\beta_0}}{k!} b_{\bbeta*}c_* \left(\prod_{i = 1}^k q^{\beta_i} \ev_{n + i}^*(\mu_{\beta_i}^\epsilon(-\psi_{n + i})) \cap [Z^{\infty}_{0,n+k,\beta_0}]^{\vir} \right),
\end{equation}
where the sums are over all degrees for which the above moduli spaces are nonempty and $\mu^{\epsilon}_{\beta}(z)$ denotes the coefficient of $q^{\beta}$ in $-z\one + [J^{\epsilon}]_+(q,z)$.
\end{theorem}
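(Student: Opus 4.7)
The plan is to adapt the master-space strategy of \cite{RR} and \cite{CFKZero} to the hybrid setting, working throughout at the level of virtual cycles rather than numerical invariants. The main tool will be a graph moduli space together with $\C^*$-virtual localization.

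I would begin by introducing a graph moduli space $GZ^\epsilon_{0,n,\beta}$ parameterizing the same hybrid data as $Z^\epsilon_{0,n,\beta}$ together with a degree-one map $f \colon C \to \P^1$ from a distinguished ``parametrized'' rational component of the source curve. The standard $\C^*$-action on $\P^1$ induces a $\C^*$-action on $GZ^\epsilon$, and one verifies that the graph space inherits a canonical perfect obstruction theory refining the hybrid obstruction theory used to define $[Z^\epsilon]^{\vir}$. Virtual $\C^*$-localization then expresses $[GZ^\epsilon]^{\vir}$ as a sum over contributions of its $\C^*$-fixed loci, which are indexed by how the degree, markings, and base-point profile distribute between the two torus-fixed points $0, \infty \in \P^1$.

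The next step is to identify these fixed loci. A distinguished ``main'' fixed locus has all degree-$\beta$ hybrid data concentrated over $0 \in \P^1$ with nothing nontrivial over $\infty$; its contribution, after dividing by the equivariant Euler class of the normal bundle, recovers $[Z^\epsilon_{0,n,\beta}]^{\vir}$. The remaining fixed loci carry an $\infty$-stable hybrid curve over $\infty \in \P^1$ attached at $k \geq 1$ distinguished points to $\epsilon$-stable ``tails'' of degrees $\beta_1, \ldots, \beta_k$ sitting over $0$; cutting at the attaching nodes factors each such locus through a product of $Z^\infty_{0,n+k,\beta_0}$ with tail moduli spaces. The key identification is that the localization contribution of each tail, once integrated against the equivariant Euler factor of its normal bundle and expanded in the $\psi$-class at the attaching node, is precisely $\mu^\epsilon_{\beta_i}(-\psi_{n+i})$, mirroring the analogous analysis in Section~4 of \cite{RR}. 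The truncation $[\,\cdot\,]_+$ in the definition of $\mu^\epsilon_\beta$ arises because the tail moduli spaces compute the coefficients of $J^\epsilon$, but only the non-negative powers of the equivariant parameter survive in the residue at $\infty$; the negative-power terms pair nontrivially with the main fixed locus and cancel in the full residue sum. Assembling these contributions, dividing by $k!$ to account for unordered tail labelings, and pushing forward via $c$ followed by $b_{\vec\beta}$ to move the $\infty$-stable data back into the $\epsilon$-stable target produces the right-hand side of \eqref{eq:mainWC}.

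The principal obstacle will be verifying the decomposition of the perfect obstruction theory on $GZ^\epsilon$ into fixed and moving parts compatible with the product structure on the fixed loci, and then confirming that the resulting vertex contributions indeed assemble into $\mu^\epsilon_\beta$. This requires a careful analysis of the hybrid obstruction theory, extending the argument in \cite{RR} to handle the multiplicity of sections $\vec{p} \in \Gamma((L^{\otimes -d} \otimes \omega_{\log})^{\oplus N})$ whose contribution to the graph-space obstruction theory must be computed at both the attaching nodes and over the parametrized $\P^1$. A secondary subtlety is bookkeeping the orbifold structures at the nodes, since the tails and the $\infty$-stable core meet with matching isotropy, which must be tracked in order to correctly pull the tail classes back to $Z^\infty_{0,n+k,\beta_0}$ via $\ev_{n+i}^*$.
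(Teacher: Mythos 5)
Your proposal captures one genuine ingredient of the argument --- a graph space with a parametrized $\P^1$ and $\C^*$-localization, with tail contributions assembling into $\mu^\epsilon_\beta$ --- but it is deployed at the wrong level and, as described, the central step would fail. The fixed loci of the $\C^*$-action on an $\epsilon$-stable graph space $\mathcal{G}Z^\epsilon_{0,n,\beta}$ consist entirely of $\epsilon$-stable data (and basepoints) distributed over $0$ and $\infty$; there is no fixed locus carrying ``an $\infty$-stable hybrid curve over $\infty$.'' A single localization computation on one graph space therefore cannot produce an identity relating $[Z^\epsilon]^{\vir}$ to $[Z^\infty]^{\vir}$. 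In the paper the comparison is obtained by running the graph-space construction \emph{twice} --- once for the $\epsilon$-stable side (the substack $\Theta$) and once for the $\infty$-stable side with tail insertions already in place (the substack $\Omega$) --- observing that both pushforwards are regular at $z=0$, and matching the resulting recursions by lexicographic induction on $(\beta,n)$. Your ``the negative-power terms pair nontrivially with the main fixed locus and cancel in the full residue sum'' has no precise counterpart and is where the wall-crossing content actually lives.

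The second missing idea is structural: the paper does not run the graph-space argument on $Z^\epsilon_{0,n,\beta}$ at all. It first localizes \emph{both} sides of \eqref{eq:mainWC} with respect to the auxiliary torus $\mathbf{T}=(\C^*)^N$ scaling the sections $\vec p$. This decomposes each side into decorated trees whose stable vertices are moduli of weighted spin curves $\M^{1/d,\epsilon}_{0,\mathrm{val}(v),\beta_v}$, and the graph-space recursion is carried out only there (Theorem~\ref{thm:vertexWC}), where the $p$-field obstruction theory has been absorbed into explicit edge, flag, and twisting factors. The proof of Theorem~\ref{thm:2} then consists of applying the vertex wall-crossing at every vertex, invoking Theorem~\ref{thm:1} to rewrite the vertex tail insertions $\nu^{\epsilon,j_v}_{\beta_i}$ as restrictions $i_{j_v}^*\mu^\epsilon_{\beta_i}$ plus localization tails of $J^\infty$, and reassembling the $\mathbf{T}$-localization graphs for $Z^\infty_{0,n+m,\beta_0}$ under $b_{\vec\beta*}c_*$. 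This reassembly --- showing that summing over graphs $\Gamma$ for $Z^\epsilon$ matches summing over graphs $\Omega$ for $Z^\infty$ compatibly with the comparison maps --- is the combinatorial heart of the theorem and is absent from your outline; note also that Theorem~\ref{thm:1} is an input to Theorem~\ref{thm:2}, not a parallel statement.
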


The form of Theorem~\ref{thm:2} is identical to the higher-genus wall-crossing statement for the hybrid model proven by Janda, Ruan, and the first author in \cite{CJRGLSM}.  However, the proof of the higher-genus statement is an induction in which the genus-zero base case must be proven independently.  Thus, the proof in \cite{CJRGLSM} in fact relies on Theorem~\ref{thm:2}, so this work also completes the verification of higher-genus wall-crossing in the hybrid model.

It should be noted that the higher-genus LG/CY correspondence for the hybrid model still remains conjectural.  Indeed, although wall-crossing statements have now been established in both the hybrid phase and the quasimap phase (the latter by Ciocan-Fontanine--Kim in \cite{CFKHigher}, or by the alternative proof of \cite{CJR}), the analytic continuation relating the $\epsilon=0+$ theories on the two sides is a subtle issue that has so far been tackled only in genus one, by Guo and the second author \cite{GR2}.

\subsection{Plan of the paper}

We begin, in Section~\ref{sec:definitions}, by reviewing the definition of the hybrid model, including the state space, the moduli space, the genus-zero virtual cycle and correlators, and the $J$-function.  In Section~\ref{sec:localization}, we introduce an action of the torus $\mathbf{T} = (\C^*)^N$ on the moduli space by scaling the section $\vec{p}$, which yields a $\mathbf{T}$-equivariant virtual cycle, and we carefully analyze the contributions to the virtual cycle from each $\mathbf{T}$-fixed locus.  In particular, the fixed loci are indexed by decorated graphs whose vertices correspond to moduli spaces of weighted spin curves.  Mimicking and generalizing the techniques of \cite{RR}, we prove in Section~\ref{sec:vertexWC} the local analogues of Theorems~\ref{thm:1} and \ref{thm:2} at each vertex.  Finally, in Section~\ref{sec:proofs}, we prove Theorem~\ref{thm:1} via the vertex wall-crossing together with a localization recursion, and we deduce Theorem~\ref{thm:2} by localization on both sides.

\subsection{Acknowledgments}

The authors thank Felix Janda and Yongbin Ruan for many enlightening conversations.  This work was completed with the support of a Development of Research and Creativity grant from San Francisco State University in Spring 2018, during which the first author was a Research Member at the Mathematical Sciences Research Institute.

\section{Definitions and set-up}
\label{sec:definitions}

We review the definition of the hybrid model, which is a special case of the more general gauged linear sigma model (GLSM) constructed by Fan, Jarvis, and Ruan \cite{FJRGLSM}.

Let $F_1(x_1, \ldots, x_M), \ldots, F_N(x_1, \ldots, x_M)$ be quasihomogeneous polynomials of the same weights $w_1, \ldots, w_M$ and the same degree $d$, defining a nonsingular complete intersection
\[Y:=\{F_1 = \cdots = F_N = 0\} \subseteq \P(w_1, \ldots, w_M).\]
Assume, furthermore, that $w_i | d$ for each $i$.

The general GLSM depends on the choice of a GIT quotient $X=[V\GIT_{\!\theta}G]$, a polynomial function $W: X \rightarrow \C$ known as the superpotential, and an action of $\C^*$ on $V$ known as the $R$-charge.  In our case, $V = \C^{M+N}$ with coordinates $(x_1, \ldots, x_M, p_1, \ldots, p_N)$, and
\[G:= \{(g^{w_1}, \ldots, g^{w_M}, g^{-d}, \ldots, g^{-d}) \; | \; g \in \C^*\} \cong \C^*\]
acts diagonally on $V$.  For any negative character $\theta \in \text{Hom}_{\Z}(\C^*,\C^*) \cong \Z$, the resulting GIT quotient is
\[X = \bigoplus_{i=1}^M \O_{\P(d,\ldots, d)}(-w_i).\]
The superpotential on this space is defined by
\[W(x_1, \ldots, x_M, p_1, \ldots, p_N) := \sum_{j=1}^N p_jF_j(x_1, \ldots, x_M),\]
and the $R$-charge acts by diagonal multiplication on the $p$-coordinates.  The critical locus of $W$, i.e. the points where $dW=0$, is the zero section
\[Z:= \P(d,\ldots, d) \subseteq X,\]
as one readily checks from the fact that $Y$ is nonsingular.

\subsection{State space}

In what follows, insertions to hybrid model correlators are chosen from the space
\[\widetilde{\mathcal{H}} := H^*_{\text{CR}}(X).\]
This is not precisely the state space of the GLSM, but it maps surjectively to the ``compact-type" part of the GLSM state space; see \cite[Section 2.1]{CJRGLSM} and Remark~\ref{rem:ct} below for further discussion.

The space $\widetilde{\mathcal{H}}$ decomposes into summands indexed by the components of the inertia stack $\mathcal{I}X$, which are labeled by elements of $g \in G$ with nonempty fixed locus.  More specifically, the elements $(g^{w_1}, \ldots, g^{w_M}, g^{-d}, \ldots, g^{-d}) \in G$ with nonempty fixed locus are those for which $g^d =1$, and for such $g$, it is straightforward to check that the fixed locus is
\[X_g:= \bigoplus_{i \in F_g} \O_{\P(\vec{d})}(-w_i) \subseteq X,\]
where
\[F_g:= \{i \; | \; g^{w_i} = 1\} \subseteq \{1, \ldots, M\}.\]
Thus, we have
\[\widetilde{\mathcal{H}} = \bigoplus_{g \in \Z_d} H^*(X_g).\]

\subsection{Moduli space}

The general definition of the moduli space in the GLSM was proposed by Fan--Jarvis--Ruan \cite{FJRGLSM}, building on the notion of quasimaps introduced by Ciocan-Fontanine and Kim.

Fix a genus $g$, a degree $\beta \in \Z$, a nonnegative integer $n$, and a positive rational number $\epsilon$.

\begin{definition}
\label{def:qmap}
An $\epsilon$-stable Landau--Ginzburg quasimap to $Z$ consists of an $n$-pointed prestable orbifold curve $(C;q_1, \ldots, q_n)$ of genus $g$ with nontrivial isotropy only at marked points and nodes, an orbifold line bundle $L$ on $C$, and a section
\[\vec{p} = (p_1, \ldots, p_N) \in \Gamma((L^{\otimes -d} \otimes \omega_{\log})^{\oplus N}),\]
where
\[\omega_{\log}:= \omega_C([\overline{q}_1] + \cdots + [\overline{q}_N])\]
for the coarse divisors $[\overline{q}_i]$, satisfying the following conditions:
\begin{itemize}
\item {\it Representability}: For every $q \in C$ with isotropy group $G_q$, the homomorphism $G_q \rightarrow \C^*$ giving the action of the isotropy group on the bundle $L$ is injective.
\item {\it Nondegeneracy}: The zero set of $\vec{p}$ is finite and disjoint from the marked points and nodes of $C$, and for each zero $q$ of $\vec{p}$, the order of the zero (that is, the common order of vanishing of $p_1, \ldots, p_N$) satisfies
\[\text{ord}_q(\vec{p}) \leq \frac{1}{\epsilon}.\]
\item {\it Stability}: The $\Q$-line bundle
\[(L^{\otimes -d} \otimes \omega_{\log})^{\otimes \epsilon} \otimes \omega_{\log}\]
is ample.
\end{itemize}
The zeroes of $\vec{p}$ are referred to as {\it basepoints} of the quasimap, and the {\it degree} of the quasimap is defined as
\[\beta:= \text{deg}(L^{\otimes -d} \otimes \omega_{\log}).\]
Note that $\beta$ must be an integer, since if $L^{\otimes -d} \otimes \omega_{\log}$ had nontrivial orbifold structure then basepoints would be forced to occur at special points.
\end{definition}

Fan--Jarvis--Ruan proved in \cite{FJRGLSM} that there is a proper, separated Deligne--Mumford stack $Z^{\epsilon}_{g,n,\beta}$ parameterizing genus-$g$, $n$-pointed, $\epsilon$-stable Landau--Ginzburg maps of degree $\beta$ to $Z$, up to the natural notion of isomorphism.

\subsection{Multiplicities and evaluation maps}

Recall that if $q$ is a point on an orbifold curve $C$ with isotropy group $\Z_r$ and $L$ is an orbifold line bundle on $C$, then the {\it multiplicity} of $L$ at $q$ is defined as the number $m \in \Q/\Z$ such that the canonical generator of $\Z_r$ acts on the total space of $L$ in local coordinates by'
\[(x,v) \mapsto \left(e^{2\pi \i \frac{1}{r}} x, e^{2\pi\i m}v\right).\]
In our case, all multiplicities can be taken to lie in the set $\left\{0,\frac{1}{d}, \ldots, \frac{d-1}{d}\right\}$.  For a tuple $\vec{m} = (m_1, \ldots, m_n)$ of such multiplicities, we define
\[Z^{\epsilon}_{g,\vec{m},\beta} \subseteq Z^{\epsilon}_{g,n,\beta}\]
as the (open and closed) substack consisting of Landau--Ginzburg quasimaps for which the multiplicity of $L$ at $q_i$ is $m_i$.

It is a straightforward exercise (see, for example, \cite[Section 2.3]{CJRGLSM}) to check that $Z^{\epsilon}_{g,\vec{m},\beta}$ is nonempty only if
\begin{equation}
\label{eq:mults}
\frac{-\beta + 2g-2+n}{d} - \sum_{i=1}^n m_i \in \Z.
\end{equation}
In particular, since \eqref{eq:mults} is independent of the $i$th marked point if and only if $m_i = \frac{1}{d}$, this is the only case in which there is a forgetful map on $Z^{\epsilon}_{g,\vec{m},\beta}$ forgetting $q_i$ and its orbifold structure.

To define evaluation maps
\[\ev_k: Z^{\epsilon}_{g,n,\beta} \rightarrow \overline{\mathcal{I}}Z\subseteq \overline{\mathcal{I}}X = \bigsqcup_{g \in \Z_d} \bigoplus_{i \in F_g}\O_{\P^{N-1}}(-w_i)\]
to the rigidified inertia stack of $Z$, let $\pi: \mathcal{C} \rightarrow Z^{\epsilon}_{g,n,\beta}$ be the universal curve, let $\mathcal{L}$ be the universal bundle, and let $\vec{\rho}$ be the universal section of the bundle $(\mathcal{L}^{\otimes -d} \otimes \omega_{\pi,\log})^{\oplus N}$.  If $\Delta_k \subseteq \mathcal{C}$ denotes the divisor corresponding to the $k$th orbifold marked point, then
\[\vec{\rho}\big|_{\Delta_k} \in \Gamma \left( (\mathcal{L}^{\otimes -d})^{\oplus N} \bigg|_{\Delta_k} \right),\]
using the fact that $\omega_{\pi,\log}|_{\Delta_k}$ is trivial.  Thus, evaluating $\vec{\rho}|_{\Delta_k}$ at the fiber over a point $(C;q_1, \ldots, q_n; L; \vec{p})$ in the moduli space yields an element of $\P^{N-1}$, and by definition, $\ev_k$ sends $Z^{\epsilon}_{g,\vec{m},\beta}$ to the copy of $\P^{N-1}$ sitting inside $\overline{\mathcal{I}}X$ as the zero section in the sector indexed by $g = e^{2\pi \i m_k} \in \Z_d$.

\subsection{Comparison maps}
\label{subsec:bc}

There are two types of comparison maps that relate the hybrid moduli spaces to one another.  Careful definitions appear in \cite[Section 3.2]{CJRGLSM}, based on the ideas of \cite[Section 3.2]{CFKZero}.  First, if $\vec{\beta} = (\beta_1, \ldots, \beta_k)$ is a tuple of nonnegative integers and $\vec{m} = (m_1, \ldots, m_k)$ is defined by $m_i := \left\langle \frac{\beta_i+1}{d}\right\rangle$ for each $i$, then the morphism
\[b_{\vec\beta}: Z^{\epsilon}_{g,n+\vec{m}, \beta - \sum_{i=1}^k \beta_i} \rightarrow Z^{\epsilon}_{g,n,\beta}\]
replaces the last $k$ marked points with basepoints of orders $\beta_1, \ldots, \beta_k$ and contracts unstable components (replacing them by basepoints) as necessary.

Similarly, the comparison map
\[c: Z^{\infty}_{g,n,\beta} \rightarrow Z^{\epsilon}_{g,n,\beta}\]
contracts any rational tails that become unstable under the change of stability condition and replaces them with basepoints.

\subsection{Virtual cycle}

The general definition of the virtual cycle
\[[Z^{\epsilon}_{g,n,\beta}]^{\vir} \in A_*(Z^{\epsilon}_{g,n,\beta})\]
proceeds by the cosection technique of Kiem--Li \cite{KL}, following closely related work of Chang--Li \cite{CL} and Chang--Li--Li \cite{CLL}.  In genus zero, however, the situation is substantially simpler: the condition that $w_i|d$ implies that 
\[R^1\pi_*\left( \bigoplus_{i=1}^M \left(\mathcal{L}^{\otimes w_i} \otimes \O\left(-\sum_{k=1}^n \Delta_k\right)\right)\right)\]
is a vector bundle (see \cite[Section 4.2.9]{Clader}, or the analogous argument in Lemma~\ref{lem:vanishing} below), and we have

\[[Z^{\epsilon}_{0,n,\beta}]^{\vir} = e \left(\bigoplus_{i=1}^M R^1\pi_*\left( \mathcal{L}^{\otimes w_i} \otimes \O\left(-\sum_{k=1}^n \Delta_k\right)\right)\right) \cap [Z^{\epsilon}_{0,n,\beta}].\]
In fact, twisting down by the orbifold marked point is equivalent, on coarse underlying curves and hence on cohomology, to not twisting down at all if the multiplicity of $L^{\otimes w_i}$ is nonzero, or to twisting down by the coarse divisor $[\overline{q}_k]$ if the multiplicity is zero.  Thus,
\[[Z^{\epsilon}_{0,\vec{m},\beta}]^{\vir} = e \left(\bigoplus_{i=1}^M R^1\pi_*\left( \mathcal{L}^{\otimes w_i} \right) \otimes \O\left(-\sum_{k \; | \; w_i m_k \in \Z} \overline{\Delta}_k\right)\right) \cap [Z^{\epsilon}_{0,\vec{m},\beta}],\]
where $\overline{\Delta}_k$ is the marked point divisor pulled back from the coarse underlying curve.

Equipped with this virtual cycle, we can define correlators in the hybrid model.  Recall that the psi classes are defined by
\[\psi_k = c_1(\mathbb{L}_k) \in A^*(Z^{\epsilon}_{g,n,\beta})\]
for each $k\in \{1,\ldots, n\}$, where $\mathbb{L}_k$ is the line bundle whose fiber over a moduli point is the cotangent line to the coarse curve at the $k$th marked point.

\begin{definition}
Given
\[\phi_1, \ldots, \phi_n \in \widetilde{\mathcal{H}}\]
and nonnegative integers $a_1, \ldots, a_n$, the associated {\it genus-0, degree-$\beta$, $\epsilon$-stable GLSM correlator} is
\[\langle \phi_1 \psi^{a_1} \cdots \phi_n \psi^{a_n} \rangle^{\epsilon}_{0,n,\beta} = \int_{[Z^{\epsilon}_{0,n,\beta}]^{\vir}} \ev_1^*(\phi_1)\psi_1^{a_1} \cdots \ev_n^*(\phi_n)\psi_n^{a_n}.\]
\end{definition}

\subsection{Unit and pairing}
\label{sec:pairing}

Given the discussion following equation \eqref{eq:mults}, the role of the unit in the GLSM theory is played by $\one := \one_{(1/d)}$, the fundamental class in the twisted sector $H^*(X_{e^{2\pi\i/d}}) \subseteq \widetilde{\mathcal{H}}$.

Using this, we define a pairing on the state space $\widetilde{\mathcal{H}}$ by
\[\left( \phi_1, \phi_2 \right) := \langle \phi_1 \; \phi_2 \; \one \rangle^{\epsilon}_{0,3,0}.\]
More explicitly, for each $m \in \Q/\Z$ and each class $\phi \in H^*(\P^{N-1})$, let $\phi_{(m)}$ denote the class given by $\phi$ in the twisted sector $H^*(X_{e^{2\pi\i m}}) \cong H^*(\P^{N-1})$.  Let $H \in H^*(\P^{N-1})$ be the hyperplane class, and let
\[F_m := F_{e^{2\pi i m}} = \{i \; | \; mw_i \in \Z\}.\]
On the moduli space $Z^{\epsilon}_{0,(m,-m,1/d),0}$, one calculates that
\[h^1\left(L^{\otimes w_i} \otimes \O\left(-\sum_{k=1}^n [q_k]\right)\right) = \begin{cases} 1 & \text{if } i \in F_m\\ 0 & \text{otherwise,} \end{cases}\]
and the pairing is given by
\[\left( H^j_{(m)}, H^{N-1-|F_m|-j}_{(-m)}\right) = \int_{\P(\vec{d})} H^{N-1-|F_m|} \; e\left(\bigoplus_{i  \in F_m} \O_{\P(\vec{d})}(-w_i)\right) = \frac{1}{d}\prod_{i \in F_m} \left(-\frac{w_i}{d}\right).\]
for any $0 \leq j \leq N-1-|F_m|$, where all other pairings equal zero.

This pairing is degenerate, since whenever $j > N-1 - |F_m|$, the class $H^j_{(m)}$ pairs to zero with every element of $\widetilde{\mathcal{H}}$.  However, it becomes nondegenerate when restricted to the subspace
\[\widetilde{\H}^{\text{ct}} \subseteq \widetilde{\H}\]
generated by $H^j_{(m)}$ with $H^j_{(m)} e\left(\bigoplus_{i \in F_m} \O_{\P^{N-1}}(-w_i)\right) \neq 0$.  This is sufficient for our purposes, because invariants with insertions in the complementary subspace to $\widetilde{\mathcal{H}}^{\text{ct}}$ all vanish, as the next lemma shows.

\begin{lemma}\label{lem:vanishing}
Let $\mathcal{V} = T_{\overline{\mathcal{I}}X/\overline{\mathcal{I}}Z}$ be the relative tangent bundle, whose restriction to $X_{e^{2\pi i m}}$ is $\bigoplus_{i \in F_m} \O_{\P^{N-1}}(-w_i)$.  Let $\phi \in \widetilde{\H}$ be such that
\[\phi \cdot e(\mathcal{V}) = 0.\]
Then
\[\ev_k^*(\phi) \cap [Z^{\epsilon}_{0,n,\beta}]^{\vir} = 0\]
for all $k =1, \ldots, n$.
\begin{proof}
Without loss of generality, let $k=1$.  The lemma will follow if we can prove that
\begin{equation}
\label{eq:goal}
[Z^{\epsilon}_{0,n,\beta}]^{\vir}= \ev_1^*(e(\mathcal{V})) \cdot e\left(\bigoplus_{i=1}^M R^1\pi_*\left(\mathcal{L}^{\otimes w_i} \otimes \O\left(-\sum_{k=2}^n \Delta_k\right)\right)\right) \cap [Z^{\epsilon}_{0,n,\beta}],
\end{equation}
since this will imply that
\[\ev_1^*(\phi) \cap [Z^{\epsilon}_{0,n,\beta}]^{\vir} = \ev_1^*(\phi \cdot e(\mathcal{V})) \cdot e\left(\bigoplus_{i=1}^M R^1\pi_*\left(\mathcal{L}^{\otimes w_i} \otimes \O\left(-\sum_{k=2}^n \Delta_k\right)\right)\right) \cap [Z^{\epsilon}_{0,n,\beta}]\]
and the first factor on the right-hand side is zero by assumption.

In order to ensure that \eqref{eq:goal} makes sense, we must first verify that
\begin{equation}
\label{eq:R0}
R^0\pi_*\left(\mathcal{L}^{\otimes w_i} \otimes \O\left(-\sum_{k=2}^n \Delta_k\right)\right) = 0
\end{equation}
for each $i$, so that the expression inside the Euler class is indeed a bundle.  To check \eqref{eq:R0}, we calculate the degree of $|L^{\otimes w_i} \otimes \O\left(-\sum_{k=2}^n[q_k]\right)|$.  If we denote
\[a^i_k := \begin{cases} 1 & \text{mult}_{q_k}(L^{\otimes w_i}) = 0\\ 0 & \text{mult}_{q_k}(L^{\otimes w_i}) \neq 0,\end{cases}\]
then
\begin{align*}
\deg\left|L^{\otimes w_i} \otimes \O\left(-\sum_{k=2}^n[q_k]\right)\right| &= \deg\left|L^{\otimes w_i} \otimes \O\left(-\sum_{k=2}^na^i_k[\overline{q}_k]\right)\right|\\
&=\deg\left(L^{\otimes w_i}\right) - \sum_{k=1}^n \text{mult}_{q_k}(L^{\otimes w_i}) - \sum_{k=2}^n a^i_k\\
&=\deg\left(L^{\otimes w_i}\right) - \sum_{k=2}^n \left(\text{mult}_{q_k}(L^{\otimes w_i}) + a^i_k\right) - \text{mult}_{q_1}(L^{\otimes w_i}).
\end{align*}
The fact that $w_i | d$ implies that the smallest possible nonzero value for $\mult_{q_k}(L^{\otimes w_i})$ is $\frac{w_i}{d}$, and hence the above is less than or equal to
\[\deg\left(L^{\otimes w_i}\right) - \frac{w_i}{d}(n-1)= \frac{w_i}{d}(n-2-\beta) - \frac{w_i}{d}(n-1) < 0.\]
On an irreducible curve, this is enough to conclude that $L^{\otimes w_i} \otimes \O\left(-\sum_{k=2}^n [q_k]\right)$ has no nonzero global sections.  For reducible curves, one applies an induction on components starting from a tail not containing $q_1$ to deduce, component-by-component, that any global section again must vanish.  This establishes \eqref{eq:R0}.

Now, to prove \eqref{eq:goal}, we use the exact sequence
\[0 \rightarrow \mathcal{L}^{\otimes w_i} \left(-\sum_{k=1}^n\Delta_k\right) \rightarrow \mathcal{L}^{\otimes w_i} \left(-\sum_{k=2}^n\Delta_k\right) \rightarrow \mathcal{L}^{\otimes w_i} \left(-\sum_{k=2}^n\Delta_k\right)\bigg|_{\Delta_1} \rightarrow 0.\]
The associated long exact sequence, together with \eqref{eq:R0}, yields
\[
0 \rightarrow R^0\pi_*\left(\mathcal{L}^{\otimes w_i}\bigg|_{\Delta_1}\right) \rightarrow R^1\pi_*\left(\mathcal{L}^{\otimes w_i} \left(-\sum_{k=1}^n\Delta_k\right)\right) \rightarrow R^1\pi_*\left(\mathcal{L}^{\otimes w_i} \left(-\sum_{k=2}^n\Delta_k\right)\right) \rightarrow 0.
\]
The first term is zero when $\text{mult}_{q_1}(L^{\otimes w_i}) = 0$---or, in other words, when $i \notin F_{m_1}$---since any section of an orbifold bundle vanishes at a point with nonzero multiplicity.  When $i \in F_{m_1}$, this term equals  $\ev_1^*(\O_X(-w_i))$.  Thus, summing over $i$ and taking Euler classes produces exactly \eqref{eq:goal}.
\end{proof}
\end{lemma}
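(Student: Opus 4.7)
By symmetry it suffices to treat $k=1$. The strategy is to rewrite the virtual cycle so that $\ev_1^*(e(\mathcal{V}))$ appears as an explicit factor: concretely, I would aim to establish
\[
[Z^{\epsilon}_{0,n,\beta}]^{\vir} = \ev_1^*(e(\mathcal{V})) \cdot e\Bigl(\bigoplus_{i=1}^M R^1\pi_*\bigl(\L^{\otimes w_i} \otimes \O(-\sum_{k=2}^n \Delta_k)\bigr)\Bigr) \cap [Z^{\epsilon}_{0,n,\beta}].
\]
Once this identity is in hand, capping against $\ev_1^*(\phi)$ pulls the first factor inside to give $\ev_1^*(\phi \cdot e(\mathcal{V}))$, which is zero by hypothesis, and the lemma follows.

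To produce the factorization, the natural move is to compare the bundle $\L^{\otimes w_i}(-\sum_{k=1}^n \Delta_k)$ that defines the virtual cycle with its partially untwisted variant $\L^{\otimes w_i}(-\sum_{k=2}^n \Delta_k)$ via the short exact sequence
\[
0 \to \L^{\otimes w_i}\bigl(-\sum_{k=1}^n \Delta_k\bigr) \to \L^{\otimes w_i}\bigl(-\sum_{k=2}^n \Delta_k\bigr) \to \L^{\otimes w_i}\bigl(-\sum_{k=2}^n \Delta_k\bigr)\big|_{\Delta_1} \to 0.
\]
If the $R^0\pi_*$ of the middle term vanishes, then the long exact sequence collapses to a short exact sequence of locally free sheaves whose leftmost term is $R^0\pi_*(\L^{\otimes w_i}|_{\Delta_1})$ (using that $\omega_{\pi,\log}$ is trivial along $\Delta_1$). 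This leftmost term vanishes when the multiplicity of $\L^{\otimes w_i}$ at $q_1$ is nonzero, since orbifold sections must vanish where the isotropy acts nontrivially, and it equals $\ev_1^*(\O(-w_i))$ otherwise. Summing over $i$ and invoking multiplicativity of the Euler class on short exact sequences delivers exactly the factor of $\ev_1^*(e(\mathcal{V}))$ required.

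The main obstacle is the $R^0$ vanishing for $\L^{\otimes w_i}(-\sum_{k=2}^n \Delta_k)$. Passing to the coarse curve and exploiting $w_i \mid d$, the smallest nonzero multiplicity of $\L^{\otimes w_i}$ is $w_i/d$, which should give a degree bound of the shape
\[
\deg \leq \tfrac{w_i}{d}(n-2-\beta) - \tfrac{w_i}{d}(n-1) < 0.
\]
On an irreducible curve this forces $H^0 = 0$ immediately. On reducible curves the bound need not hold componentwise, so I would argue inductively: begin with an outermost rational tail $C'$ not containing $q_1$, check negativity of the restricted degree there (accounting for nodes and any marked points supported on $C'$), conclude that a global section must vanish identically on $C'$, and then propagate the vanishing across the separating node into the adjacent component. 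Carefully tracking the multiplicities at internal nodes during this inductive sweep is the most delicate step; everything else is a formal consequence of the short exact sequence above together with the multiplicativity of the Euler class.
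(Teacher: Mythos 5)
Your proposal is correct and follows essentially the same route as the paper's proof: the same factorization identity, the same short exact sequence comparing $\L^{\otimes w_i}(-\sum_{k=1}^n\Delta_k)$ with $\L^{\otimes w_i}(-\sum_{k=2}^n\Delta_k)$, the same degree bound exploiting $w_i \mid d$ for the $R^0$ vanishing, and the same component-by-component induction on reducible curves starting from tails not containing $q_1$. No gaps.
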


\begin{remark}
\label{rem:ct}
Lemma~\ref{lem:vanishing} verifies Conjecture 2.12 of \cite{CJRGLSM} in genus zero.  In keeping with the language of that paper, the notation ``ct" is chosen to reflect the fact that $\widetilde{\H}^{\ct}$ is isomorphic to the compact-type state space described in \cite[Section 2.1]{CJRGLSM}.
\end{remark}

\subsection{Small $J$-function and the wall-crossing formula}
\label{sec:J}

The $J$-function for $\epsilon$-stable quasimap theory was defined by Ciocan-Fontanine and Kim in \cite{CFKZero}, and was generalized to the spin setting by the second author and Ruan in \cite{RR}.  To define it, we let $\mathcal{G}Z^{\epsilon}_{0,1,\beta}$ be the ``graph space" parameterizing the same data as $Z^{\epsilon}_{0,1,\beta}$ together with a parameterization of one component $C_0 \subseteq C$ on which the ampleness condition of Definition \eqref{def:qmap} is not required.  By \eqref{eq:mults}, the multiplicity of $L$ at the single marked point $q_1$ must be
\[m_1 = \left\langle \frac{-\beta-1}{d}\right\rangle.\]

There is an action of $\C^*$ on $\mathcal{G}Z^{\epsilon}_{0,1,\beta}$ by scaling the parameterized component. More specifically, let $[x_0,x_1]$ be the homogeneous coordinates on $C_0$ and let $\C^*$ act by $t\cdot[x_0,x_1]:=[tx_0,x_1]$. Let $F^{\epsilon}_{\beta} \subseteq \mathcal{G}Z^{\epsilon}_{0,1,\beta}$ be the fixed locus on which $q_1 = \infty=[0:1] \in C_0$ and all of the degree lies over $0=[1:0]$.  When $\beta > 1/\epsilon$, we have
\[F^{\epsilon}_{\beta} \cong Z^{\epsilon}_{0,1,\beta},\]
while when $\beta \leq 1/\epsilon$, the moduli space $Z^{\epsilon}_{0,1,\beta}$ is empty and instead, we have
\[F^{\epsilon}_{\beta} \cong Z,\]
corresponding to quasimaps whose degree is entirely concentrated in a single basepoint at $0 \in C_0$.  In either case, there is an evaluation map
\[\ev_{\bullet}: F^{\epsilon}_{\beta} \rightarrow \overline{\mathcal{I}}Z\]
defined by evaluation at the single marked point $\infty \in C_0$.

\begin{definition}
Let $z$ denote the equivariant parameter for the action of $\C^*$ on $\mathcal{G}Z^{\epsilon}_{0,1,\beta}$.  The {\it small $\epsilon$-stable $J$-function} is
\[J^{\epsilon}(q,z) := -z^2 \sum_{\beta \geq 0 \atop \phi} q^{\beta}\left(\int_{[F^{\epsilon}_{\beta}]^{\vir}}\frac{\ev_\bullet^*(\phi)}{e_{\C^*}(N^{\vir}_{F^{\epsilon}_{\beta}/\mathcal{G}Z^{\epsilon}_{0,1,\beta}})}\right)\phi^\vee \in \widetilde{\mathcal{H}}^{\text{ct}}_{\mathbb{C}^*}[[q]][z,z^{-1}],\]
where $\phi$ runs over any basis of $\widetilde\H^{\text{ct}}$ and $(-)^\vee$ denotes dual under the pairing introduced in Section \ref{sec:pairing}.
\end{definition}

More explicitly, since for $\beta > 1/\epsilon$ we have $[F^{\epsilon}_{\beta}]^{\vir} = [Z^{\epsilon}_{0,1,\beta}]^{\vir}$ and
\[e_{\C^*}(N^{\vir}_{F^{\epsilon}_{\beta}/\mathcal{G}Z^{\epsilon}_{0,1,\beta}}) = -z^2(z-\psi_1),\]
the contribution of such $\beta$ to the $J$-function is 
\begin{equation}
\label{eq:Jstable}
-z^2\sum_{\phi}\left(\int_{[F^{\epsilon}_{\beta}]^{\vir}}\frac{ \ev_\bullet^*(\phi)}{e_{\C^*}(N^{\vir}_{F^{\epsilon}_{\beta}/\mathcal{G}Z^{\epsilon}_{0,1,\beta}})}\right)\phi^\vee = \sum_{l=0}^{N-1-|F_{(\beta+1)/d}|}\left\langle \frac{ H^l_{\left(-\frac{\beta+1}{d}\right)}}{z-\psi_1} \right\rangle_{0,1,\beta}^\epsilon \left(H^l_{\left(-\frac{\beta+1}{d}\right)}\right)^\vee.
\end{equation}
The denominator of \eqref{eq:Jstable} should be understood as a geometric series in $\psi_1$. For $\beta \leq 1/\epsilon$, the contribution to the $J$-function can be calculated explicitly:
\begin{equation}
\label{eq:Junstable}
-z^2\sum_{\phi}\left(\int_{[F^{\epsilon}_{\beta}]^{\vir}}\frac{ \ev_\bullet^*(\phi)}{e_{\C^*}(N^{\vir}_{F^{\epsilon}_{\beta}/\mathcal{G}Z^{\epsilon}_{0,1,\beta}})}\right)\phi^\vee = z\frac{\displaystyle\prod_{i=1}^M\prod_{\substack{0 < b < \frac{w_i}{d}(\beta+1)\\ \langle b \rangle = \langle \frac{w_i}{d}(\beta+1) \rangle}} \left(-bz - \frac{w_i}{d}H_{\left(\frac{\beta+1}{d}\right)}\right)}{\displaystyle\prod_{j=1}^N\prod_{\substack{0 < b \leq \beta \\ \langle b \rangle = 0}} \left(bz + H_{\left(\frac{\beta+1}{d}\right)}\right)}\in \widetilde\H^{\text{ct}}[z,z^{-1}].
\end{equation}
Again, the denominator of \eqref{eq:Junstable} should be understood as a geometric series in $H_{\left(\frac{\beta+1}{d}\right)}$ and the series should be truncated to lie in $\widetilde\H^{\text{ct}}\subset\widetilde\H$. Terms of $J^{\epsilon}$ with $\beta \leq 1/\epsilon$ are referred to as {\it unstable terms}.
Denote by
\[[J^{\epsilon}]_+(q,z) \in \widetilde{\mathcal{H}}^{\text{ct}}[[q,z]]\]
the part of the $J$-function with non-negative powers of $z$, which has contributions only from the unstable terms. The coefficients in the change of variables in Theorem \ref{thm:2}, denoted by $\mu^{\epsilon}_{\beta}(z)$, are defined by
\[
\sum_{\beta \geq 0} q^{\beta} \mu^{\epsilon}_{\beta}(z) = [J^{\epsilon}]_+(q,z)-\one z,
\]
and they are determined explicitly by \eqref{eq:Junstable}.

We also require a generalization of the small $J$-function in which descendent insertions are allowed.  This is the {\it big $\epsilon$-stable $J$-function}, defined for $\mathbf{t} = \mathbf{t}(z) \in \widetilde\H^{\text{ct}}[[z]]$ by
\begin{equation}
\label{eq:bigJ}
J^{\epsilon}(q,\mathbf{t},z) := -z^2 \sum_{\substack{\beta \geq 0\\ n \geq 0 \\ \phi}} \frac{q^{\beta}}{n!} \left(\int_{[F^{\epsilon}_{\beta}]^{\vir}} \prod_{k=1}^n \ev_k^*(\mathbf{t}(\psi_k)) \cap \frac{\ev_{\bullet}^*(\phi)}{e_{\C^*}(N^{\vir}_{F^{\epsilon}_{n,\beta}/\mathcal{G}Z^{\epsilon}_{0,n+1,\beta}})}\right)\phi^\vee,
\end{equation}
where $\phi$ again runs over a basis for $\widetilde{\H}^{\text{ct}}$ and $\mathcal{G}Z^{\epsilon}_{0,n+1,\beta}$ is the $(n+1)$-pointed analogue of the above-defined graph space.  Inside this graph space, $F^{\epsilon}_{n,\beta}$ is the fixed locus where all but the last marked point and all of the degree are concentrated over $0 \in C_0$ while the last marked point lies at $\infty \in C_0$.  The small $J$-function is recovered from the big $J$-function by setting $\mathbf{t} = 0$.

\section{Localization framework}
\label{sec:localization}

There is an action of the torus $\mathbf{T} = (\C^*)^N$ on $Z$ by diagonal multiplication on the $p$-coordinates.  This induces an action on $Z^{\epsilon}_{0,n,\beta}$ by post-composition, or in other words, by scaling the sections $\vec{p}$.  The action naturally lifts to the bundle $R^1\pi_*\left(\mathcal{L}^{\otimes w_i} \otimes \O\left(-\sum_{k=1}^n \Delta_k\right)\right)$ for each $i$, and we let
\[[Z^{\epsilon}_{0,n,\beta}]^{\vir}_{\mathbf{T}} = e_{\mathbf{T}}\left(\bigoplus_{i=1}^M R^1\pi_*\left(\mathcal{L}^{\otimes w_i} \otimes \O\left(-\sum_{k=1}^n \Delta_k\right)\right)\right) \cap [Z^{\epsilon}_{0,n,\beta}]\]
be the $\mathbf{T}$-equivariant virtual cycle.

Let $\alpha_1, \ldots, \alpha_N$ denote the equivariant parameters for the $\mathbf{T}$-action.  Then, by the localization isomorphism, we have
\begin{equation}
\label{eq:Hj}
\widetilde{\mathcal{H}}_{\mathbf{T}} := H_{\mathrm{CR},\mathbf{T}}^*(X) \otimes \C(\alpha_1, \ldots, \alpha_N) = \bigoplus_{j=1}^N H^*_{\text{CR},\mathbf{T}}(P_j) \otimes \C(\alpha_1, \ldots, \alpha_N)=:\bigoplus_{j=1}^N\widetilde{\mathcal{H}}_j,
\end{equation}
where $P_j$ is the unique $\mathbf{T}$-fixed point of $X$ with $p_j \neq 0$; that is, $P_j = [0: \cdots :1: \cdots : 0] \in \P(d,\ldots, d) \subseteq X$, with a $1$ in the $j$th position.  For each $m \in \{0,\frac{1}{d}, \ldots, \frac{d-1}{d}\}$, we denote by $\mathbf{1}^j_{(m)}$ the fundamental class on the twisted sector of $\widetilde\H_j$ indexed by $m$.  The classes $\{\mathbf{1}^j_{(m)}\}$ form a basis of $\widetilde{\mathcal{H}}_{\mathbf{T}}$ as a $\C(\alpha_1, \ldots, \alpha_N)$-module, which we call the {\it fixed-point basis}. 

The pairing on $\widetilde\H$ lifts to a pairing on $\widetilde\H_{\mathbf{T}}$, defined by
\[
\left(H_{(m_1)}^{j_1},H_{(m_2)}^{j_2}\right)=\delta_{m_1+m_2\in\Z}\int_{\mathbb{P}(\vec d)}H^{j_1+j_2}e_{\mathbf{T}}\left(\bigoplus_{i\in F_{m_1}}\O_{\mathbb{P}(\vec d)}(-w_i)  \right),
\]
and this equivariant pairing is non-degenerate because the equivariant Euler class is invertible. In the fixed-point basis, the equivariant pairing on $\widetilde{\mathcal{H}}_{\mathbf{T}}$ is
\begin{equation}\label{eq:fixedpairing}
\left(\mathbf{1}^{j_1}_{(m_1)},\mathbf{1}^{j_2}_{(m_2)}\right)=\delta_{m_1+m_2=0\text{ mod } d}\; \frac{\prod_{i  \;| \langle w_i m_1 \rangle = 0} \left(-\frac{w_i}{d}\alpha_{j}\right)} {d\prod_{j' \neq j} \left( \alpha_{j} - \alpha_{j'}\right)}:=\eta_{(m_1)}^{j_1}.
\end{equation}

 \subsection{Fixed loci}
 
 The fixed loci of the $\mathbf{T}$-action on $Z^{\epsilon}_{0,n,\beta}$ are indexed by decorated trees.  For a tree $\Gamma$, let $V(\Gamma)$, $E(\Gamma)$, and $F(\Gamma)$ denote the sets of vertices, edges, and flags, respectively.  Localization trees are decorated as follows:
 \begin{itemize}
 \item Each vertex $v$ is decorated by an index $j_v \in \{1, \ldots, N\}$ and a degree $\beta_v \in \mathbb{N}$.
 \item Each edge $e$ is decorated by a degree $\beta_e \in \mathbb{N}_{>0}$.
 \item Each flag $(v,e)$ is decorated by a multiplicity $m_{(v,e)} \in\{0,\frac{1}{d},\dots,\frac{d-1}{d}\}$.
\end{itemize}
In addition, $\Gamma$ is equipped with a map
\[s: \{1,\ldots,n\} \rightarrow V(\Gamma)\]
assigning marked points to the vertices.  Let $E_v$ be the set of edges incident to a vertex $v$, and define the {\it valence} of $v$ as
\[\text{val}(v):= |E_v| + |s^{-1}(v)|.\]

Given a tree $\Gamma$ with the above decorations, the fixed locus $F_{\Gamma} \subseteq Z^{\epsilon}_{0,n,\beta}$ indexed by $\Gamma$ parameterizes Landau--Ginzburg quasimaps as follows:
 \begin{itemize}
   \item Each vertex $v \in V(\Gamma)$ corresponds to a connected component $C_v \subseteq C$ over which $p_j = 0$ for $j \neq j_v$, and $\beta_v$ is the degree of the restriction of $L^{\otimes -d} \otimes \omega_{\log}$ to $C_v$.  If $Z^{\epsilon}_{0,\text{val}(v),\beta_v} \neq \emptyset$, then $C_v$ is a sub-curve and we say $v$ is {\it stable}.  If $Z^{\epsilon}_{0,\text{val}(v),\beta_v} = \emptyset$, then we say $v$ is {\it unstable}, and $C_v$ is the single point $q_v$; if $\beta_v > 0$, this point is a basepoint on $C_e$ of order $\beta_v$.
 \item Each edge $e \in E(\Gamma)$ with adjacent vertices $v$ and $v'$ corresponds to an orbifold projective line $C_e$ over which $p_j = 0$ for $j \neq j_v,j_{v'}$.  The section $p_{j_v}$ vanishes only at a single point $q_{v'}$, while the section $p_{j_{v'}}$ vanishes only at a single point $q_v$, and $\beta_e$ is the part of $\deg(L^{\otimes -d} \otimes \omega_{\log}|_{C_e})$ not coming from basepoints.  In other words,
\[\beta_e = \begin{cases} \deg(L^{\otimes -d} \otimes \omega_{\log}|_{C_e}) & v,v' \text{ stable }\\  \deg(L^{\otimes -d} \otimes \omega_{\log}|_{C_e})- \beta_v & v  \text{ unstable}\\ \deg(L^{\otimes -d} \otimes \omega_{\log}|_{C_e}) - \beta_{v'} & v'  \text{ unstable.} \end{cases}\]
 \item The set $s^{-1}(v) \subseteq \{1,\ldots, n\}$ indexes the marked points supported on $C_v$.
 \item For flags $(v,e) \in F(\Gamma)$:
 \begin{enumerate}[(i)]
 \item  If $v$ is stable, then the flag $(v,e)$ corresponds to a node attaching $C_v$ to $C_e$ and $m_{(v,e)}$ is the multiplicity of $L$ on the vertex branch of the node.
 \item If $v$ is unstable of valence two, then either (1) $|E_v| =2$ and $|s^{-1}(v)| = 0$, in which case the flag $(v,e)$ corresponds a node attaching $C_e$ to the component $C_{e'}$ associated to the other edge $e'$ incident to $v$, and $m_{(v,e)}$ is the multiplicity of $L$ at $q_v \in C_{e'}$; or (2) $|E_v| = 1$ and $|s^{-1}(v)| = 1$, in which case the flag $(v,e)$ corresponds to a marked point point at $q_v \in C_e$ and $-m_{(v,e)}$ is the multiplicity of $L$ at $q_v$.
 \item If $v$ is unstable of valence one, then the flag $(v,e)$ corresponds to the unmarked point $q_v \in C_e$ and $m_{(v,e)} =  -\frac{\beta_v + 1}{d} \mod \Z$.
 \end{enumerate}
 \end{itemize}

The localization formula expresses $[Z^{\epsilon}_{0,n,\beta}]^{\vir}_{\mathbf{T}}$ in terms of contributions from each localization tree $\Gamma$.  To make this more explicit, define a moduli space $\M^{1/d,\epsilon}_{0,n,\beta}$ that parameterizes tuples
\[(C; q_1, \ldots, q_n; L;D; \varphi),\]
where $(C; q_1, \ldots, q_n; L)$ are as usual, $D$ is a divisor on $C$ of degree $\beta$, and $\varphi$ is an isomorphism
\[\varphi: L^{\otimes -d} \otimes \omega_{\log} \xrightarrow{\sim} \O(D).\]
We assume the usual representability and stability conditions, as well as the nondegeneracy condition that if
\[D = \sum_k b_k[y_k]\]
for distinct points $y_k \in C$, then the points $y_k$ are disjoint from the marked points and nodes and $b_k \leq 1/\epsilon$ for each $k$.  (This is a special case of the moduli spaces of ``weighted spin curves" studied in \cite{RR}.)

For each element of the fixed locus associated to a localization tree $\Gamma$ and each stable vertex $v$, one obtains an element of $\M^{1/d,\epsilon}_{0,\text{val}(v),\beta_v}$ by taking $D$ to be the zero locus of $p_{j_v}$.  Thus, if we let
\[
F_{\Gamma} = \prod_{v \text{ stable}} \M^{1/d,\epsilon}_{0,\text{val}(v),\beta_v},
\]
then there is a canonical family of $\mathbf{T}$-fixed elements of $Z^{\epsilon}_{0,n,\beta}$ over $F_{\Gamma}$, which yields a morphism
 \[
 \iota_{\Gamma}: F_{\Gamma} \rightarrow Z^{\epsilon}_{0,n,\beta}
 \]
 that is \'etale onto the fixed locus corresponding to $\Gamma$.  The localization formula then yields
 \begin{equation}
 \label{eq:locstart}
 [Z^{\epsilon}_{0,n,\beta}]^{\vir}_{\mathbf{T}} = \sum_{\Gamma} \frac{1}{|\text{Aut}(F_\Gamma)|} \iota_{\Gamma *} \left( \frac{[F_{\Gamma}]^{\vir}_{\mathbf{T}}}{e_{\mathbf{T}}(N_{\Gamma}^{\vir})}\right),
 \end{equation}
 where $\text{Aut}(F_\Gamma)$ is the group of automorphisms of a generic element of the fixed locus indexed by $\Gamma$. In the next subsection, we calculate $|\text{Aut}(F_\Gamma)|$ and $e^{-1}_{\mathbf{T}}(N_{\Gamma}^{\vir})$ explicitly.

\subsection{Localization contributions}
\label{subsec:loccontr}

For each localization tree $\Gamma$, the localization contribution can be divided into vertex, edge, and flag contributions, following the standard argument that has appeared in \cite{GP} and in many other contexts.  To summarize, one applies the normalization exact sequence to the relative obstruction theory on $Z^{\epsilon}_{0,n,\beta}$ to express it in terms of contributions on vertex components, edge components, and nodes.  This accounts for all but the automorphisms and deformations of $(C,q_1,\dots,q_n,L)$.  The latter are comprised of deformations of the vertex components and their bundle $L$ (included in the vertex contributions below), automorphisms/deformations of the edge components and their $L$ (included in the edge contributions), and deformations smoothing the nodes (included in the flag contributions).

In the end, we write 
\begin{align}
\label{eq:locfinal}
&\ev_1^*(\phi_1) \psi_1^{a_1} \cdots \ev_n^*(\phi_n)\psi_n^{a_n} \cap [Z^{\epsilon}_{0,n,\beta}]^{\vir}_{\mathbf{T}} =\\
 \nonumber &\sum_{\Gamma} \frac{1}{|\text{Aut}(\Gamma)|} \iota_{\Gamma*}\left(\prod_{v \in V(\Gamma)} \text{Contr}_{\Gamma}(v)\prod_{(v,e)\in F(\Gamma)}\text{Contr}_\Gamma(v,e)\right)\prod_{e \in E(\Gamma)} \text{Contr}_{\Gamma}(e),
\end{align}
where $\text{Contr}_\Gamma(v)$, $\text{Contr}_\Gamma(e)$, and $\text{Contr}_\Gamma(v,e)$ are described below. 

\subsubsection{Stable vertex contributions} First, let $v$ be a stable vertex of $\Gamma$.  The deformations of the marked curve $C_v$ and the line bundle $L|_{C_v}$ are $\mathbf{T}$-fixed, so they contribute to the virtual fundamental cycle:
\begin{equation}\label{eq:restrictvir}
e_{\mathbf{T}}^{-1}\left(R\pi_*\left(\bigoplus_{i=1}^M \left(\mathcal{L}^{\otimes w_i} \otimes \O\left(-\displaystyle\sum_{k \in s^{-1}(v)} \hspace{-0.25cm}\Delta_k\right)\right)\otimes \C_{-\frac{w_i\alpha_{j_v}}{d}} \right)\right),
\end{equation}
where $\mathbb{C}_{\alpha}$ is the topologically trivial line bundle with equivariant first Chern class $\alpha$. There is an asymmetry in \eqref{eq:restrictvir} in that the universal bundles are only twisted down by the marked points, and not at the pre-images of nodes. To correct the asymmetry, we note that \eqref{eq:restrictvir} is equal to 
\begin{equation}\label{eq:restrictvir2}
 \prod_{i  \;| \langle w_i m_{(v,e)}\rangle = 0} \left(-\frac{w_i\alpha_{j_v}}{d}\right)^{-1}e_{\mathbf{T}}^{-1}\left(R\pi_*\left(\bigoplus_{i=1}^M \left(\mathcal{L}^{\otimes w_i} \otimes \O\left(-\displaystyle\sum_{k \in s^{-1}(v)\cup E_v} \hspace{-0.25cm}\Delta_k\right)\right)\otimes \C_{-\frac{w_i\alpha_{j_v}}{d}} \right)\right),
 \end{equation}
where $E_v$ is the set of edges incident to $v$. This equality can be readily checked by using the long exact sequence in cohomology associated to the short exact sequence
\[
0\rightarrow L(-q_e)\rightarrow L \rightarrow L|_{q_e}\rightarrow 0,
\]
where $q_e$ is the pre-image of any node.

The deformations of the section $p_j$ are moving for $j\neq j_v$, and their contribution to the inverse Euler class of the virtual normal bundle is
\[e_{\mathbf{T}}^{-1}\left(\bigoplus_{j \neq j_v}R\pi_*\left(\mathcal{L}^{\otimes -d} \otimes \omega_{\log}\right) \otimes \C_{\alpha_{j_v} - \alpha_j}\right).\]

In addition, for each edge $e$ adjacent to $v$, there is a contribution from deformations smoothing the node at which $C_e$ meets $C_v$, and a gluing factor of $d$, which yields
\begin{equation*}
\prod_{e \in E_v} \frac{d}{\frac{\alpha_{j_{v}} - \alpha_{j_v'}}{\beta_e} - \psi_{(v,e)}},
\end{equation*}
where each edge $e\in E_v$ joins $v$ to another vertex $v'$, and $\psi_{(v,e)}$ is the cotangent line class to the coarse curve at the vertex branch of the node where $C_v$ and $C_e$ meet. The factor of $d$ will be absorbed into the flag term.

Motivated by these computations, for each stable vertex, we define
\begin{equation}
\label{eq:stablevertex}
\text{Contr}_{\Gamma}(v) = \prod_{k \in s^{-1}(v)} \ev_k^*\left(\phi_k|_{P_{j_v}}\right)\psi_k^{a_k} \prod_{e \in E_v} \frac{\ev_e^*\left(\one^{j_v}_{(m_{(v,e)})}\right)}{\frac{\alpha_{j_{v}} - \alpha_{j_{v'}}}{\beta_e} - \psi_{(v,e)}} \cap [\M^{1/d, \epsilon}_{0,\text{val}(v),\beta_v}]^{\vir, j_v}_{\mathbf{T}},
\end{equation}
where $\ev_e$ is the evaluation map at the half-node corresponding to the edge $e$ and
\begin{align}
\label{eq:twistedvcycle}
&[\M^{1/d, \epsilon}_{0,\text{val}(v),\beta_v}]^{\vir, j_v}_{\mathbf{T}} := [\M^{1/d, \epsilon}_{0,\text{val}(v),\beta_v}] \cap \\
\nonumber&e_{\mathbf{T}}^{-1}\left(R\pi_*\left(\bigoplus_{i=1}^M \left(\mathcal{L}^{\otimes w_i} \left(-\hspace{-0.25cm}\displaystyle\sum_{k \in s^{-1}(v)\cup E_v} \hspace{-0.25cm}\Delta_k\right)\right)\otimes \C_{-\frac{w_i\alpha_{j_v}}{d}} \oplus \bigoplus_{j \neq j_v}\left(\mathcal{L}^{\otimes -d} \otimes \omega_{\log}\right)\otimes \C_{\alpha_{j_v} - \alpha_j}\right)\right).
\end{align}

\subsubsection{Edge contributions}

Let $e$ be an edge with adjacent stable vertices $v$ and $v'$.  The marked curve $C_e$ and its line bundle $L|_{C_e}$ have no fixed deformations.  We calculate, together, the contribution from the moving deformations of $\vec{p}$ to the inverse Euler class of the virtual normal bundle and the edge contribution to the virtual fundamental cycle.  This combined contribution is
\[\frac{e_{\mathbf{T}}(\bigoplus_{i=1}^M H^1(C_e,L^{\otimes w_i}))}{e_{\mathbf{T}}(\bigoplus_{j=1}^N H^0(C_e,L^{\otimes -d})^{\text{mov}})},\]
where the superscript ``mov" denotes the moving part with respect to the $\mathbf{T}$-action.  (Here, we use that $\omega_{C_e,\log} \cong \O_{C_e}$.)  The above can be calculated explicitly as:
\begin{equation}
\label{eq:stable_edge}
 \frac{\prod_{i=1}^M \prod_{\substack{0 < b < \frac{\beta_ew_i}{d}\\ \langle b \rangle = \langle w_im_{(v,e)}\rangle}} \left(\frac{b}{\beta_e}(\alpha_{j_{v}} - \alpha_{j_{v'}}) - \frac{w_i}{d}\alpha_{j_{v}}\right)}{\prod_{j=1}^N \prod'_{\substack{0 \leq b \leq \beta_e\\ \langle b \rangle = 0}} \left(\frac{b}{\beta_e}(\alpha_{j_{v'}} - \alpha_{j_{v}}) + \alpha_{j_v} - \alpha_{j}\right)},
\end{equation}
where $\prod'$ in the denominator denotes the product over all nonzero factors. 

Notice that $L|_{C_e}$ is not quite what one would expect on an edge, because it is not twisted down at pre-images of nodes, which we think of as marked points on $C_e$. Twisting down at nodes results in changing the strict inequalities in the numerator of \eqref{eq:stable_edge} to non-strict inequalities. Combining this with the automorphisms of the edge, we define 
\begin{equation}
\label{eq:contr_e}
\text{Contr}_{\Gamma}(e) =  \frac{1}{d\beta_e}\cdot \frac{\prod_{i=1}^M \prod_{\substack{0 \leq b \leq \frac{\beta_ew_i}{d}\\ \langle b \rangle = \langle w_im_{(v,e)}\rangle}} \left(\frac{b}{\beta_e}(\alpha_{j_{v}} - \alpha_{j_{v'}}) - \frac{w_i}{d}\alpha_{j_{v}}\right)}{\prod_{j=1}^N \prod'_{\substack{0 \leq b \leq \beta_e\\ \langle b \rangle = 0}} \left(\frac{b}{\beta_e}(\alpha_{j_v'} - \alpha_{j_{v}}) + \alpha_{j_v} - \alpha_{j}\right)}
\end{equation}
as the total (stable) edge contribution.

\subsubsection{Flag contributions}

For each flag $(v,e)$ at $v$, there is a contribution to the normalization exact sequence from the corresponding node.  This equals
\begin{equation}
\label{eq:node}
e_{\mathbf{T}}\left(N_{j_v}^{m_{(v,e)}}\right) = \prod_{i  \;| \langle w_i m_{(v,e)}\rangle = 0} \left(-\frac{w_i}{d}\alpha_{j_v}\right) \prod_{j \neq j_v} \left( \alpha_{j_v} - \alpha_{j}\right),
\end{equation}
where $N_{j_v}^{m_{(v,e)}}$ is the normal bundle of the $\mathbf{T}$-fixed point $P_{j_v}$ in $X_{e^{2\pi\i m_{(v,e)}}}$. We multiply this contribution by $d$ (from the gluing term at nodes), and we multiply it by $\prod_{i  \;| \langle w_i m_{(v,e)}\rangle = 0} \left(-\frac{w_i}{d}\alpha_{j_v}\right)^{-2}$ to compensate for the factors arising from twisting down at both pre-images of the nodes. Altogether, we define
\[
\text{Contr}_{(v,e)}:=\left(\eta_{(m_{(v,e)})}^{j_v}\right)^{-1},
\]
where $\eta$ is the coefficient of the pairing \eqref{eq:fixedpairing}.

\subsubsection{Unstable vertex contributions}

We now describe the conventions for the unstable vertices, which are defined to ensure that \eqref{eq:locfinal} holds with edge and flag contributions defined as above.  For $v$ such that $|E_v|=2$ and $|s^{-1}(v)| = 0$, then by smoothing the node and compensating for the two flag terms, we define
\[
\text{Contr}_{\Gamma}(v)=\frac{\eta_{(m_{(v,e)})}^{j_v}}{\frac{\alpha_{j_{v}} - \alpha_{j_{v_1}}}{\beta_{e_1}} + \frac{\alpha_{j_{v}} - \alpha_{j_{v_2}}}{\beta_{e_2}}}.
\]
For $v$ such that $|E_v|=1$ and $|s^{-1}(v)| =1$, then restricting the insertion to $q_v$ and compensating for the flag term, we define
\begin{equation}
\label{eq:unstablevertex1}
\text{Contr}_{\Gamma}(v) =\phi_{k_v}|_{P_{j_v}}\left(\frac{\alpha_{j_{v'}} - \alpha_{j_{v}}}{\beta_e}\right)^{a_{k_v}} \eta_{(m_{(v,e)})}^{j_v},
\end{equation}
where we write $s^{-1}(v)=\{k_v\}$. Finally, let $e$ be an edge with adjacent vertices $v$ and $v'$ such that $v'$ is stable but $v$ is unstable with $E_v=\{e\}$ and $s^{-1}(v)=\emptyset$. Then a tedious but direct computation shows that
\[
\frac{e_{\mathbf{T}}(\bigoplus_{i=1}^M H^1(C_e,L^{\otimes w_i}))}{e_{\mathbf{T}}(\bigoplus_{j=1}^N H^0(C_e,L^{\otimes -d})^{\text{mov}})}=\frac{\prod_{i=1}^M\prod_{-\frac{(\beta_v+1)w_i}{d}<b<\frac{\beta_ew_i}{d} \atop \langle b \rangle =\langle w_im_{(v,e)}\rangle}\left(\frac{b}{\beta_e}(\alpha_{j_{v}} - \alpha_{j_{v'}}) - \frac{w_i}{d}\alpha_{j_{v}}\right) }{\prod_{j=1}^N \prod'_{\substack{-\beta_v \leq b \leq \beta_e\\ \langle b \rangle = 0}} \left(\frac{b}{\beta_e}(\alpha_{j_v'} - \alpha_{j_{v}}) + \alpha_{j_v} - \alpha_{j}\right)}.
\]
Removing the stable edge contribution, compensating for the flag term, and accounting for the infinitesimal automorphism at $q_v$, this motivates defining the unstable vertex contribution as follows:
\begin{equation}
\label{eq:unstable_vertex}
\text{Contr}_{\Gamma}(v) =\eta_{(m_{(v,e)})}^{j_v}\frac{\alpha_{j_{v}}-\alpha_{j_{v'}}}{\beta_e}\cdot \frac{\prod_{i=1}^M \prod_{\substack{0 < b < \frac{w_i}{d}(\beta_v+1) \\ \langle b \rangle = \langle -w_im_{(v,e)}\rangle}} \left(\frac{b}{\beta_e}(\alpha_{j_{v'}} - \alpha_{j_{v}}) - \frac{w_i}{d} \alpha_{j_v}\right)}{\prod_{j=1}^N \prod'_{\substack{0 < b \leq \beta_v\\ \langle b \rangle = 0 }} \left(\frac{b}{\beta_e}(\alpha_{j_v} - \alpha_{j_{v'}}) + \alpha_{j_v} - \alpha_{j}\right)}.
\end{equation}
It can readily be checked that this convention also make sense for edges with two unstable vertices.

\section{Wall-crossing at vertices}
\label{sec:vertexWC}

Having established the localization set-up, the first step toward the proof of Theorems~\ref{thm:1} and \ref{thm:2} is to prove an analogous statement at each vertex of each localization graph.  Exactly as in Section~\ref{sec:J}, one can define a graph space $\mathcal{G}\M_{0,1,\beta}^{1/d,\epsilon}$ parameterizing the same data as $\M_{0,1,\beta}^{1/d,\epsilon}$ together with a parameterization of one component $C_0 \subseteq C$ on which the ampleness condition of Definition~\ref{def:qmap} is not required.  There is an action of $\C^*$ on $\mathcal{G}\M_{0,1,\beta}^{1/d,\epsilon}$ scaling the parameterized component, and we denote by $V^{\epsilon}_{\beta} \subseteq \mathcal{G}\M_{0,1,\beta}^{1/d,\epsilon}$ the fixed locus on which the single marked point lies at $\infty\in C_0$ and all of the other marked points and the basepoints lie over $0 \in C_0$.

For each $j \in \{1,\ldots, N\}$, there is a twisted, $\mathbf{T}$-equivariant relative obstruction theory on $\mathcal{G}\M_{0,1,\beta}^{1/d,\epsilon}$ given by
\[
\mathbb{E}^{\bullet}_{j} = -R\pi_*\left( \bigoplus_{i=1}^M \left(\mathcal{L}^{\otimes w_i}\left(-\Delta_1\right)\right) \oplus \bigoplus_{j'\neq j}(\mathcal{L}^{\otimes -d} \otimes \omega_{\pi,\log}) \right)^{\vee},
\]
where the $\mathbf{T}$-weights are as in \eqref{eq:twistedvcycle}. This is a vector bundle, so the Poincar\'e dual of its top Chern class defines a twisted, $\mathbf{T}$-equivariant virtual cycle on the graph space.  Restricting to the fixed locus $V^{\epsilon}_{\beta}$, we have $V^{\epsilon}_{\beta} \cong \M^{1/d,\epsilon}_{0,1,\beta}$ when $\M^{1/d,\epsilon}_{0,1,\beta}\neq\emptyset$, and $[V^{\epsilon}_{\beta}]^{\vir,{j}}_{\mathbf{T}}$ agrees in this case with \eqref{eq:twistedvcycle}.  When $\beta \leq 1/\epsilon$, on the other hand, the fixed locus $V^{\epsilon}_{\beta}$ is a single (orbifold) point.

Using the evaluation map
\[
\ev_{\bullet}: V^{\epsilon}_{\beta} \rightarrow \overline{\mathcal{I}}\mathcal{B}\Z_d
\]
at the marked point, we define the twisted, $\mathbf{T}$-equivariant vertex $J$-function by
\begin{equation}
\label{eq:vertexJ}
J^{\epsilon,j}(q,z):= -z^2 \sum_{\beta \geq 0 \atop m\in\{0,\frac{1}{d},\dots,\frac{d-1}{d}\}} q^{\beta} \left(\int_{[V^{\epsilon}_{\beta}]^{\vir, {j}}_{\mathbf{T}}} \frac{\ev_{\bullet}^*(\one_{(m)}^{j}) }{e_{\C^*}(N^{\vir}_{V^{\epsilon}_{\beta}/\mathcal{G}\M^{1/d,\epsilon}_{0,1,\beta}})}\right)(\one_{(m)}^{j})^\vee \in \widetilde\H_{j}[[q]]((z)),
\end{equation}
where $\widetilde{\H}_j$ is defined in \eqref{eq:Hj} and the dual is with respect to the pairing in \eqref{eq:fixedpairing}.

We define the big version $J^{\epsilon,j}(q,\mathbf{t},z)$ for $\mathbf{t} \in \widetilde\H_{j}[z]$ similarly.  In particular, for $\beta \leq 1/\epsilon$, the $q^{\beta}$-coefficient in \eqref{eq:vertexJ} is
\[
z\frac{\prod_{i=1}^M \prod_{\substack{0 < b < \frac{w_i}{d}(\beta+1)\\ \langle b \rangle = \left\langle \frac{w_i}{d}(\beta+1)\right\rangle}} \left(-bz - \frac{w_i}{d}\alpha_{j}\right)}{\prod_{j'=1}^N \prod'_{\substack{0 < b \leq \beta\\ \langle b \rangle = 0}} (bz + \alpha_{j} - \alpha_{j'})}\one_{\left(\frac{\beta+1}{d}\right)}^{j},
\]
while for $\beta>1/\epsilon$, the $q^{\beta}$-coefficient in \eqref{eq:vertexJ} is
\[
\sum_{m\in\{0,\frac{1}{d},\dots,\frac{d-1}{d}\}}\left\langle \frac{\one_{(m)}^{j}}{z-\psi_1} \right\rangle_{0,1,\beta}^{\epsilon,j}(\one_{(m)}^{j})^\vee.
\]

Let $\nu^{\epsilon,j}_{\beta}(z)$ be the $q^{\beta}$-coefficient in $[J^{\epsilon,j}]_+(q,z)-\one^j_{(1/d)}$, where the rational functions are expanded as Laurent series in $z$. With this notation established, we state the vertex wall-crossing theorem.

\begin{theorem}
\label{thm:vertexWC}
For any $n\geq 1$, and for any $j \in \{1,\ldots, N\}$, one has
\[ 
\frac{[\M_{0,n,\beta}^{1/d,\epsilon}]^{\vir,j}_{\mathbf{T}}}{z-\psi_n} =\sum_{\beta_1+ \dots+ \beta_k=\beta} \frac{1}{k!} b_{\bbeta*}\left(\prod_{i = 1}^k \ev_{n + i}^*(\nu_{\beta_i}^{\epsilon,j}(-\psi_{n + i})) \cap\frac{[\M_{0,n+k,0}^{1/d,\infty}]^{\vir,{j}}_{\mathbf{T}}}{z-\psi_n} \right).
\]
When $n=1$ and $\beta\leq 1/\epsilon$, we use the convention that
\begin{equation}\label{eq:convention1}
\frac{[\M_{0,1,\beta}^{1/d,\epsilon}]^{\vir,j}_{\mathbf{T}}}{z-\psi_1}:=[q^\beta]J^{\epsilon,j}(q,z)\in\widetilde\H_j[z],
\end{equation}
and when $n=k=1$ in the right-hand side, we use the convention that
\begin{equation}\label{eq:convention2}
\ev_{2}^*(\mathbf{t}(-\psi_{2})) \cap\frac{[\M_{0,2,0}^{1/d,\infty}]^{\vir,j}_{\mathbf{T}}}{z-\psi_1}:=\mathbf{t}(z)\in\widetilde\H_j[z].
\end{equation}
(Here, for a power series $F(q)$, the notation $[q^{\beta}]F(q)$ refers to the coefficient of $q^{\beta}$.)

\begin{remark}\label{rmk:unstable}
If we further make the convention that $\ev_1^*(\one_{(m)}^j):\widetilde\H_j\rightarrow H_{\mathbf{T}}^*(\text{pt})$ is the map $\phi\mapsto (\one_{(m)}^j,\phi)$, then conventions \eqref{eq:convention1} and \eqref{eq:convention2} imply that all of the vertex contributions in the localization formula, including the unstable ones, can be written uniformly as \eqref{eq:stablevertex}.
\end{remark}

\begin{proof}
Expanding both sides as Laurent series at $z=0$, the only contribution to the regular part comes from the unstable contributions \eqref{eq:convention1} and \eqref{eq:convention2}. Since the regular parts of the theorem are already in agreement by the definition of $\nu_{\beta}^{\epsilon,j}$, it remains to prove that the two sides of the theorem agree in their principal parts. We proceed by lexicographic induction on $(\beta,n)$. For the base case $\beta=0$, both sides are equal by observation. 

Now suppose $\beta>0$, and let us first focus on the left-hand side of the theorem. Consider the graph space $\mathcal{G}\M_{0,n,\beta}^{1/d,\epsilon}$ along with the map $\rho:\mathcal{G}\M_{0,n,\beta}^{1/d,\epsilon}\rightarrow \M_{0,n,\beta}^{1/d,\epsilon}$ that forgets the parametrization and stabilizes. In the case that $n=1$ and $\beta\leq 1/\epsilon$, we make the convention that $\M_{0,1,\beta}^{1/d,\epsilon}=\mathcal{B}\mathbb{Z}_d$ and $\rho$ is ${\ev}_\bullet$ followed by the map that takes any class to its dual under the twisted pairing.

There is a $\mathbf{T}$-equivariant substack $\Theta\subseteq\mathcal{G}\M_{0,n,\beta}^{1/d,\epsilon}$ parametrizing elements of $\mathcal{G}\M_{0,n,\beta}^{1/d,\epsilon}$ where the last marked point lies over $\infty\in C_0$ and at least one of the basepoints lies over $0\in C_0$. Since the virtual class restricted to $\Theta$ is an equivariant class, it follows that $\rho_*[\Theta]^{\vir,j}_{\mathbf{T}}$ is regular at $z=0$. Inverting $z$ and computing $\rho_*[\Theta]^{\vir,j}_{\mathbf{T}}$ by localization, there are three types of fixed loci:
\begin{enumerate}
\item $\Theta_\infty$, where $\infty\in C_0$ is a smooth marked point of $C$, meaning that all of the basepoints and the first $n-1$ marked points lie over $0$,
\item $\Theta_{n_1,\beta_1|n_2,\beta_2}$, where the basepoints and marked points split up over $0$ and $\infty$ in a stable way such that neither $0$ nor $\infty$ in $C_0$ are smooth points of $C$, and
\item $\Theta_{\beta_1}$, where $0\in C_0$ is a smooth basepoint of order $\beta_1\leq1/\epsilon$.
\end{enumerate}
The three types of fixed loci contribute to give
\begin{align*}
\rho_*[\Theta]^{\vir,j}_{\mathbf{T}}=\frac{ [\M_{0,n,\beta}^{1/d,\epsilon}]^{\vir,j}_{\mathbf{T}}}{z-\psi_n}&+\sum_{n_1+n_2=n\atop \beta_1+\beta_2=\beta}\frac{[\M_{0,n_1+\bullet,\beta_1}^{1/d,\epsilon}\times\M_{0,n_2+\star,\beta_2}^{1/d,\epsilon}]^{\vir,j}_{\mathbf{T}}}{(z-\psi_\bullet)(-z-\psi_\star)}\\
&+z\sum_{\beta_1\leq1/\epsilon}(b_{\beta_1})_*\frac{\ev_{n + 1}^*(\nu_{\beta_1}^{\epsilon,j}(z)) \cap[\M_{0,n+1,\beta-\beta_1}^{1/d,\epsilon}]^{\vir,j}_{\mathbf{T}}}{-z-\psi_{n+1}},
\end{align*}
where the product in the second term is a divisor in $\M_{0,n,\beta}^{1/d,\epsilon}$, and its virtual class is determined by the virtual classes on each component and the pairing via the usual splitting property. The fact that the total contribution is regular at $z=0$ shows that the principal part of the first term is determined by the principal parts of the other terms, which are determined recursively in $(\beta,n)$.

We now turn our attention to the right-hand side of the theorem. Consider the sum of graph space classes
\begin{equation}\label{eq:relativeclass}
\sum_{\beta_1+\cdots+\beta_k=\beta}\frac{1}{k!} b_{\bbeta*}\left(\prod_{i = 1}^k \ev_{n + i}^*(\nu_{\beta_i}^{\epsilon,v}(-\psi_{n + i})) \cap[\mathcal{G}\M_{0,n+k,0}^{1/d,\infty}]^{\vir,j}_{\mathbf{T}} \right).
\end{equation}
Similar to the previous case, let $\Omega$ be the substack where the $n$th marked point lies over $\infty$ and at least one of the last $k$ marked points lies over $0$ (notice that $k\geq 1$ because $\beta>0$). The class \eqref{eq:relativeclass} restricted to this substack is again regular at $z=0$, and so is its pushforward. As in the previous case, the pushforward can be computed by localization and there are three types of fixed loci:
\begin{enumerate}
\item $\Omega_\infty$, where $\infty\in C_0$ is a smooth marked point,
\item $\Omega_{n_1,\beta_1|n_2,\beta_2}$, where the marked points split up in a stable way, and
\item $\Omega_{\beta_1}$, where $0\in C_0$ is a smooth marked point.
\end{enumerate}
The localization contribution of $\Omega_\infty$ is equal to
\[
\frac{\sum\limits_{\beta_1+\cdots+\beta_k=\beta}\frac{1}{k!} b_{\bbeta*} \left(\prod_{i = 1}^k \ev_{n + i}^*(\nu_{\beta_i}^{\epsilon,j}(-\psi_{n + i})) \cap[\M_{0,n+k,0}^{1/d,\infty}]^{\vir,j}_{\mathbf{T}} \right)}{z-\psi_n},
\]
and, by the induction hypothesis, the contributions of $\Omega_{n_1,\beta_1|n_2,\beta_2}$ and $\Omega_{\beta_1}$ are the same as the contributions of $\Theta_{n_1,\beta_1|n_2,\beta_2}$ and $\Theta_{\beta_1}$. Thus, the principal parts of the contributions of $\Omega_\infty$ and $\Theta_\infty$ are the same, finishing the induction step.

\end{proof}
\end{theorem}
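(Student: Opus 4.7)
The plan is to follow the Ciocan-Fontanine--Kim style ``double graph space'' wall-crossing argument, in the spin-theoretic form developed in \cite{RR}. First I would split the claimed identity into its regular and principal parts at $z=0$. On the regular side, unwinding the definition of $\nu^{\epsilon,j}_{\beta_i}$ as the $q^{\beta_i}$-coefficient of $[J^{\epsilon,j}]_+(q,z)-\one^j_{(1/d)}$, together with the conventions \eqref{eq:convention1} and \eqref{eq:convention2}, shows that the regular parts of both sides are just the tautologically-defined unstable contributions and so agree automatically; all the content lies in the principal parts.

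I would then proceed by lexicographic induction on $(\beta,n)$, with trivial base $\beta=0$ where only the $k=0$ term on the right survives. For the inductive step, I would introduce the $\C^*$-parameterized graph spaces $\mathcal{G}\M^{1/d,\epsilon}_{0,n,\beta}$ (for the left-hand side) and $\mathcal{G}\M^{1/d,\infty}_{0,n+k,0}$ (for each summand on the right-hand side, with insertions $\ev_{n+i}^*(\nu^{\epsilon,j}_{\beta_i}(-\psi_{n+i}))$ attached), and on each I would isolate the $\mathbf{T}$-equivariant substack where the $n$th marked point lies over $\infty\in C_0$ and at least one ``special'' point (a basepoint on the left, an auxiliary marked point on the right) lies over $0\in C_0$. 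Pushing forward these substack classes along the forgetful/stabilization map to the unparameterized moduli stack yields classes that are regular at $z=0$, because they are restrictions of equivariant classes. Computing the pushforwards via $\C^*$-localization on $C_0$ decomposes each into three families of fixed loci: (i) a distinguished locus with everything over $0$ and only the $n$th marked point at $\infty$, contributing exactly the desired $[\M^{\vir}]/(z-\psi_n)$ term on the respective side; (ii) stable bipartite splittings in which some special points and some degree spread to both $0$ and $\infty$; and (iii) a single unstable tail of degree $\beta_1\leq 1/\epsilon$ concentrated at $0$.

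The crux is to match families (ii) and (iii) between the two sides. For (ii), the virtual splitting axiom decomposes each contribution as an integral over a product of two smaller moduli spaces glued along a node; the induction hypothesis applied to the factor of strictly smaller $(\beta,n)$ type identifies the two sides termwise. For (iii), the explicit twisted-Euler-class computation at the single basepoint/marked-point over $0$ reproduces exactly the unstable $q^{\beta_1}$-coefficient formula that defines $\nu^{\epsilon,j}_{\beta_1}$, and the residual factor is a class of smaller $(\beta-\beta_1,n+1)$ type to which the induction again applies. Since both total pushforwards are regular at $z=0$ and their (ii) and (iii) contributions agree, the principal parts of their (i) contributions must also agree, which is exactly the principal part of the theorem. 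The main obstacle I expect is the precise verification that the type-(iii) localization factor, after accounting for flag normalizations and the gluing with $b_{\bbeta*}$, is the same product formula as in the unstable part of $J^{\epsilon,j}$; a secondary subtlety is consistency with the conventions \eqref{eq:convention1} and \eqref{eq:convention2} when $\M^{1/d,\epsilon}_{0,1,\beta}$ is empty, which must be threaded carefully through both the base and inductive steps.
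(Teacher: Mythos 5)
Your proposal is correct and follows essentially the same route as the paper's proof: the same splitting into regular and principal parts, the same lexicographic induction on $(\beta,n)$, the same pair of graph-space substacks (last marked point over $\infty$, a special point over $0$) whose pushforwards are regular at $z=0$, and the same matching of the splitting-type and unstable-tail fixed loci via the induction hypothesis to conclude that the principal parts of the distinguished loci agree.
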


As a result of the previous theorem, we obtain the following statement on the level of generating series.

\begin{corollary}
\label{cor:vertexJWC}
For any $j\in \{1,\ldots, N\}$, the twisted vertex $J$-functions satisfy the wall-crossing formula
\[J^{\epsilon, j}(q,\mathbf{t}(z),z) = J^{\infty, j}(\mathbf{t}(z)+z \mathbf{\one} + [J^{\epsilon,j}]_+(q,-z),z).\]
\end{corollary}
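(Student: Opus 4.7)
The plan is to deduce the corollary from the class-level wall-crossing in Theorem~\ref{thm:vertexWC} by substituting it into the generating-series definition of the big vertex $J$-function and reorganizing the resulting expression.

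First, I would write out $J^{\epsilon,j}(q,\mathbf{t}(z),z)$ using the vertex version of the big-$J$-function formula \eqref{eq:bigJ}, as a sum over $n \geq 0$, $\beta \geq 0$, and fixed-point basis labels $m$, of integrals weighted by $\tfrac{q^{\beta}}{n!}$, with insertions $\ev_k^*(\mathbf{t}(\psi_k))$ at the first $n$ marked points, the class $\tfrac{\ev_\bullet^*(\one_{(m)}^j)}{z-\psi_{n+1}}$ at the distinguished point, and $(\one_{(m)}^j)^\vee$ as an outer factor. By the conventions \eqref{eq:convention1} and \eqref{eq:convention2} of Remark~\ref{rmk:unstable}, the unstable contributions take precisely this same uniform form, so the entire generating series is expressible in terms of the classes $\tfrac{[\M^{1/d,\epsilon}_{0,n+1,\beta}]^{\vir,j}_{\mathbf{T}}}{z-\psi_{n+1}}$ without splitting into cases.

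Next, I would apply Theorem~\ref{thm:vertexWC} (with its $n$ replaced by $n+1$) to rewrite each such class as a sum of pushforwards $b_{\bbeta*}$ of $\tfrac{[\M^{1/d,\infty}_{0,n+1+k,0}]^{\vir,j}_{\mathbf{T}}}{z-\psi_{n+1}}$ carrying additional insertions $\ev_{n+1+i}^*(\nu^{\epsilon,j}_{\beta_i}(-\psi_{n+1+i}))$ at the $k$ new marked points. The projection formula, together with the standard fact that $\psi$-classes and evaluation morphisms at the preserved marked points pull back through $b_{\bbeta}$ to their counterparts on the source, allows moving the pushforward outside so that the entire integral lands on $\M^{1/d,\infty}_{0,n+1+k,0}$.

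The remaining work is combinatorial. Summing over the tuples $\bbeta = (\beta_1, \ldots, \beta_k)$ and absorbing the factor $q^{\sum_i \beta_i}$, the defining relation between $\nu^{\epsilon,j}_\beta$ and $[J^{\epsilon,j}]_+$ converts the $k$ new insertions into $k$ independent copies of the shift $(z\one + [J^{\epsilon,j}]_+(q,-z))\big|_{z=\psi_{n+1+i}}$. Applying the multinomial identity $\tfrac{1}{n!\,k!} = \tfrac{1}{(n+k)!}\binom{n+k}{n}$, the double sum over $n$ and $k$ collapses to a single sum over $N = n+k$ marked points each carrying the uniform combined insertion $(\mathbf{t}(z) + z\one + [J^{\epsilon,j}]_+(q,-z))\big|_{z=\psi}$, which matches the defining series of $J^{\infty,j}$ evaluated at the shifted argument.

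The main obstacle will be the pullback compatibility of $\psi$-classes and evaluation morphisms through $b_{\bbeta}$ in the second step---requiring that the basepoint-creating contractions respect these classes at the preserved marked points---which is standard for comparison maps in quasimap-type theories but must be verified in the spin setting. The conventions of Remark~\ref{rmk:unstable} are essential throughout to avoid an ad-hoc treatment of the unstable contributions during the symmetrization.
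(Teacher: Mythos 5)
Your proposal is correct and is essentially the paper's own argument: the paper's entire proof of Corollary~\ref{cor:vertexJWC} is the single sentence ``Integrate both sides of Theorem~\ref{thm:vertexWC},'' and your write-up is exactly the detailed unpacking of that step (substitution of the class-level identity into the generating series, projection formula, and the $\tfrac{1}{n!\,k!}$ resummation). The compatibility and unstable-convention points you flag are precisely what the paper's conventions \eqref{eq:convention1}, \eqref{eq:convention2}, and Remark~\ref{rmk:unstable} are set up to handle.
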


\begin{proof}
Integrate both sides of Theorem \ref{thm:vertexWC}.
\end{proof}

\section{Proofs of main theorems}
\label{sec:proofs}

In this section, we use the localization calculations of Section~\ref{sec:localization} and the vertex wall-crossing results of Section~\ref{sec:vertexWC} to prove the two main theorems.

\subsection{Proof of Theorem \ref{thm:1}}

The contents of this subsection are closely modeled on the work of Brown \cite{Brown} and Coates--Corti--Iritani--Tseng \cite{CCIT}, and they follow previous applications of these ideas to the hybrid model in \cite{ClaRo} and \cite{RR}. More specifically, in order to prove 
\[
J^{\epsilon}(q,z) = J^{\infty}(q,z \mathbf{\one} + [J^{\epsilon}]_+(q,-z),z),
\]
we characterize the right-hand side as an element of $\widetilde\H((z^{-1}))[[q]]$, then we show that the left-hand side satisfies this characterization.

For both the equivariant and non-equivariant settings, define
\[
\mathcal{V} := \widetilde{\mathcal{H}}_{(\mathbf{T})}((z^{-1}))[[q]],
\]
and consider the subset\footnote{In Givental's formalism, one gives $\mathcal{V}$ the structure of a symplectic vector space and proves that $\mathcal{L}$ is an overruled Lagrangian cone.  These properties can be proven in the current setting, using the fact that the genus-zero $\infty$-stable hybrid model correlators satisfy the string and dilaton equations and topological recursion relations.   However, these properties are not necessary for our purposes.}
\begin{equation}
\label{eq:L}
\mathcal{L}_{(\mathbf{T})} := \left\{\iota_*J^{\infty}(q,\mathbf{t},-z) \; | \; \mathbf{t}(z) \in \widetilde\H_{(\mathbf{T})}[z][[q]]\right\} \subseteq \mathcal{V}_{(\mathbf{T})},
\end{equation}
where $\mathbf{t}(z)$ satisfies $\mathbf{t}(z)|_{q=0}=0$, which ensures that the elements converge as power series in $q$, and $\iota_*:\widetilde\H^{\text{ct}}\rightarrow\widetilde\H$ is the injection
\[
H^j_{(m)}\mapsto H^j_{(m)} \frac{1}{d}\prod_{i \in F_m}\left(-\frac{w_i}{d}H_{(m)}\right).
\]
The reason for the map $\iota_*$ is that it allows one to write
\begin{align}\label{eq:iotaJ}
\iota_*J^{\epsilon}(q,-z) = -\frac{z}{d}\sum_{\beta \leq 1/\epsilon} q^{\beta}&\left(\frac{\prod_{i=1}^M\prod_{\substack{0 \leq b < \frac{w_i}{d}(\beta+1)\\ \langle b \rangle = \langle \frac{w_i}{d}(\beta+1) \rangle}} \left(bz - \frac{w_i}{d}H_{\left(\frac{\beta+1}{d}\right)}\right)}{\prod_{j=1}^N\prod_{\substack{0 < b \leq \beta \\ \langle b \rangle = 0}} \left(-bz + H_{\left(\frac{\beta+1}{d}\right)}\right)}\right)\\
\nonumber + &\sum_{\beta > 1/\epsilon} q^{\beta}\sum_{l=0}^{N-1} \left\langle \frac{H^l_{\left(-\frac{\beta+1}{d}\right)}}{-z-\psi_1}\right\rangle^{\epsilon}_{0,1,\beta} H^{N-l}_{\left(\frac{\beta+1}{d}\right)}\in\widetilde\H((z^{-1}))[[q]],
\end{align}
and one can safely include the terms $l>N-1-\left|F_{\left(\frac{\beta+1}{d} \right)}\right|$ in the second sum because the corresponding invariants vanish by Lemma \ref{lem:vanishing}.  The first summation in \eqref{eq:iotaJ} encodes the ``unstable terms" and the second summation the ``stable terms." 

Analogously, for each $\mathbf{T}$-fixed point $P_j \in Z$, let
\[
\mathcal{V}^j := \widetilde\H_{j}((z))[[q]],
\]
and let
\begin{equation}
\label{eq:vertexL}
\mathcal{L}^{j} = \left\{\iota_*J^{\infty, j}(q,\mathbf{t},-z) \; | \; \mathbf{t}(z) \in \widetilde\H_{j}[z][[q]]\right\} \subseteq \mathcal{V}^j,
\end{equation}
where $\mathbf{t}(z)$ satisfies $\mathbf{t}(z)|_{q=0}=0$ and, restricting to the fixed point, $\iota_*:\widetilde\H_{j}\rightarrow \widetilde\H_{j}$ becomes
\[
\one_{(m)}^j\mapsto \one_{(m)}^j \frac{1}{d}\prod_{i\in F_m}\left(-\frac{w_i\alpha_j}{d}\right).
\]

\begin{remark}
It is essential and worth pointing out that $\mathcal{L}$ consists of Laurent series in $z^{-1}$ while $\mathcal{L}^{j}$ consists of Laurent series in $z$.
\end{remark}

In order to prove Theorem~\ref{thm:1}, it suffices to prove that $\iota_*J^{\epsilon}(q,-z) \in \mathcal{L}_{\mathbf{T}}$ for some equivariant lift of $\iota_*J^{\epsilon}(q,-z) $.  Indeed, by \eqref{eq:L}, this will prove that there exists some $\mathbf{t} \in \widetilde{\mathcal{H}}_\mathbf{T}[z][[q]]$ for which $J^{\epsilon}(q,z) = J^{\infty}(q,\mathbf{t},z)$.  The specific choice of $\mathbf{t}$ is determined by the fact that
\[J^{\infty}(q,\mathbf{t},z) = z\one+ \mathbf{t}(-z) + O(z^{-1}),\]
so taking the part of the equation $J^{\epsilon}(q,z) = J^{\infty}(q,\mathbf{t},z)$ with non-negative powers of $z$ yields
\[
\mathbf{t}(z) = z\one+ [J^{\epsilon}]_+(q,-z).
\]
Taking the non-equivariant limit proves Theorem~\ref{thm:1}.

The strategy for proving that $\iota_*J^{\epsilon}(q,-z) \in \mathcal{L}_{\mathbf{T}}$ is to prove a characterization of elements of $\mathcal{L}_{\mathbf{T}}$.  We make use of the following notation.  If $\mathbf{f} \in \mathcal{V}_{\mathbf{T}}$, then for each $j \in \{1,\ldots, N\}$, we denote by $\mathbf{f}_j$ the image of $\mathbf{f}$ under the restriction map
\[
\widetilde{\mathcal{H}}_{\mathbf{T}}\rightarrow \widetilde\H_{j}.
\]
For $m \in \{0,\frac{1}{d}, \ldots, \frac{d-1}{d}\}$, we denote by $\mathbf{f}_{j,m}$ the coefficient of $\iota_*(\one_{(-m)}^j)^\vee=\prod_{k\neq j}(\alpha_j-\alpha_k)\one_{(m)}^j$ in $\mathbf{f}$.

For each $m,m' \in \left\{0,\frac{1}{d},\dots,\frac{d-1}{d}\right\}$, we set
\[E^{m,m'} := \left\{\beta \in \Z_{>0}\; \left| \; \frac{\beta}{d} - m - m' \in \Z \right.\right\},\]
so that $E^{m,m'}$ is the set of possible degrees $\beta_e$ in a localization graph for which $e$ is an edge adjacent to vertices $v$ and $v'$ with $m_{e,v} = m$ and $m_{e,v'} = m'$.  For each $\beta \in E^{m,m'}$, we define the {\it recursive term}
\[
\text{RC}^{m,m'}_{j,j'}(\beta) := \frac{1}{\beta} \cdot \frac{\prod_{i=1}^M \prod_{\substack{0 \leq b < \frac{\beta w_i}{d}\\ \langle b \rangle = \langle w_im \rangle}}\left(\frac{b}{\beta}(\alpha_{j}-\alpha_{j'}) - \frac{w_i}{d}\alpha_j\right)}{\prod_{k=1}^N \prod'_{\substack{0 \leq b < \beta\\ \langle b \rangle =0 }} \left( \frac{b}{\beta}(\alpha_{j'} - \alpha_{j}) + \alpha_j - \alpha_{k}\right)}.
\]

With this notation established, elements of $\mathcal{L}_{\mathbf{T}}$ are characterized as follows.

\begin{proposition}
\label{prop:conechar}
An element $\mathbf{f} \in \mathcal{V}$ lies in $\mathcal{L}_{\mathbf{T}}$ if and only if the following are satisfied:
\begin{enumerate}
\item For each $j$ and $m$, the restriction $\mathbf{f}_{j,m}$ lies in $\C(z,\alpha_1, \ldots, \alpha_N)[[q]]$ and, as a rational function of $z$, each $q^{\beta}$-coefficient of $\mathbf{f}_{j,m}$ is regular except for possible poles at $z=0$, $z= \infty$, and $z = (\alpha_{j'}-\alpha_j)/\beta$ with $\beta \in E^{m,m'}$ for some $j',m'$.
\item For each $j \neq j'$, each $m,m'$, and each $\beta \in E^{m,m'}$, we have
\[
\text{Res}_{z=\frac{\alpha_{j}-\alpha_{j'}}{\beta}} \mathbf{f}_{j,m} = -q^\beta \cdot \text{RC}^{m,m'}_{j,j'}(\beta) \cdot \mathbf{f}_{j',-m'}\big|_{z=\frac{\alpha_{j}-\alpha_{j'}}{\beta}}.
\]
\item The Laurent expansion of each $\mathbf{f}_j$ at $z=0$ lies in $\mathcal{L}^{j}$.
\end{enumerate}
\begin{proof}
The proof is similar to that in \cite{ClaRo}, which differs from the current setting only in the twist of the universal bundle at broad marked points. For completeness, we sketch the main ideas, which will be expanded upon further in the proof of Theorem \ref{thm:1}, where both the stable and unstable cases are treated.

Assume $\mathbf{f} \in \mathcal{L}_{\mathbf{T}}$. Then $\mathbf{f}=\iota_*J^{\infty}(q,\mathbf{t},-z)$ for some $\mathbf{t}(z)$ and it follows from the localization formula that 
\begin{align}\label{eq:cone}
\nonumber \mathbf{f}_j=\iota_*&\Bigg(z\one_{(1/d)}^j+\mathbf{t}_j(z)+\sum_{{j'\neq j}\atop {m,m'\in\left\{0,\frac{1}{d},\dots,\frac{d-1}{d}\right\} \atop \beta\in E^{m,m'}}}\frac{T^{m,m'}_{j,j'}}{-z+\frac{\alpha_j-\alpha_{j'}}{\beta}}\\
&+\sum_{n\geq 2 \atop k\in\{0,\frac{1}{d},\dots,\frac{d-1}{d}\}}\frac{1}{n!}\left\langle\left(\mathbf{t}_j(\psi)+\sum_{j',m,m',\beta}\frac{T^{m,m'}_{j,j'}}{-\psi+\frac{\alpha_j-\alpha_{j'}}{\beta}}\right)^n\frac{\one_{(k)}^j}{-z-\psi_{n+1}}\right\rangle_{0,n+1}^{\infty,j}(\one_{(k)}^j)^\vee\Bigg).
\end{align}
Indeed, the term $\frac{T^{m,m'}_{j,j'}}{-z+\frac{\alpha_j-\alpha_{j'}}{\beta}}\in \widetilde\H_j(z)[[q]]$ is the sum of all localization contributions from graphs where the last marked point is on an unstable vertex of valence two with adjacent edge $e$ having opposite vertex $v'$ with $j_{v'}=j'$, $m_{(e,v')}=m'$, and $m_{(e,v)}=m$. The second line of \eqref{eq:cone} collects all localization contributions where the last marked point is on a stable vertex. Properties (1) and (3) are observed from \eqref{eq:cone}. The recursion in property (2) reflects the removal of the edge $e$ from $\frac{T^{m,m'}_{j'}}{-z+\frac{\alpha_j-\alpha_{j'}}{\beta}}$.

Conversely, suppose $\mathbf{f}$ satisfies properties (1) and (3). Then, by a partial fractions decomposition, $\mathbf{f}$ can be written in the form \eqref{eq:cone} where the terms $\mathbf{t}_j(z)$ and $T^{m,m'}_{j,j'}$ are undetermined power series in $q$. Since property (2) recursively (in $q$) determines $T^{m,m'}_{j,j'}$, we see that properties (1), (2), and (3) determine $\mathbf{f}$ up to $\mathbf{t}(z)$. It follows that $\mathbf{f}=\iota_*J^\infty(q,\mathbf{t},-z)$, because $\iota_*J^\infty(q,\mathbf{t},-z)$ also satisfies properties (1), (2), and (3), and the regular parts when expanded as Laurent series in $z^{-1}$ of the two sides are both equal to $\mathbf{t}(z)$. Thus, $\mathbf{f}\in\mathcal{L}_{\mathbf{T}}$.
\end{proof}
\end{proposition}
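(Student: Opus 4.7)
The plan is to prove each direction of the characterization by $\mathbf{T}$-localization on the graph space $\mathcal{G}Z^{\infty}_{0,n+1,\beta}$ underlying $J^{\infty}(q,\mathbf{t},-z)$, decomposing its fixed loci according to the location of the distinguished marked point on the parameterized component $C_0$.

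For the ``only if'' direction, I would start from the assumption $\mathbf{f} = \iota_* J^{\infty}(q,\mathbf{t},-z) \in \mathcal{L}_{\mathbf{T}}$ and apply the localization formula of Section~\ref{sec:localization} to the graph-space definition, partitioning fixed loci according to whether the last marked point lies on a stable vertex of the underlying tree or on an unstable valence-two vertex attached by a single edge $e$ of degree $\beta$ to a subgraph rooted at a vertex $v'$ with $j_{v'} = j'$. This partition produces an expansion of exactly the form displayed in \eqref{eq:cone}. Properties (1) and (3) then follow immediately: the only $z$-poles come from the factors $-z + (\alpha_j-\alpha_{j'})/\beta$ produced when smoothing the node joining $C_0$ to the edge component, and the restriction of the expansion to the fixed point $P_j$ reproduces an element of $\mathcal{L}^{j}$ by running the same localization with fixed vertex decoration $j_v = j$ at the root. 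Property (2) is the geometric statement that removing the distinguished edge $e$ from a contributing graph produces a smaller graph whose contribution, evaluated at $z = (\alpha_j-\alpha_{j'})/\beta$, recovers $\mathbf{f}_{j',-m'}$ multiplied by the edge and flag factors of Section~\ref{subsec:loccontr}; these assemble, after accounting for the $\iota_*$ twists at the broad flags, into the prefactor $-q^\beta \cdot \mathrm{RC}^{m,m'}_{j,j'}(\beta)$.

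For the ``if'' direction, suppose $\mathbf{f} \in \mathcal{V}_{\mathbf{T}}$ satisfies (1), (2), and (3). Property (1) allows a partial fractions decomposition of each $\mathbf{f}_{j,m}$ as a Laurent series at $z=0$ plus a sum of simple pole contributions $T^{m,m'}_{j,j'}/\bigl(-z + (\alpha_j-\alpha_{j'})/\beta\bigr)$ indexed by $\beta \in E^{m,m'}$ and $j' \neq j$. Property (3) then identifies the regular-at-$z{=}0$ part of $\mathbf{f}_j$ with the corresponding part of $\iota_*J^{\infty,j}(q,\mathbf{t}_j,-z)$ for some $\mathbf{t}_j(z)$, and the collection $\{\mathbf{t}_j\}$ assembles into a unique $\mathbf{t}(z) \in \widetilde{\mathcal{H}}^{\ct}_{\mathbf{T}}[z][[q]]$ with $\mathbf{t}|_{q=0}=0$ via the fixed-point isomorphism \eqref{eq:Hj}. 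By the already-proven forward direction, $\iota_* J^{\infty}(q,\mathbf{t},-z)$ also satisfies (1)--(3) with the same regular part, and the recursion in (2) then determines the principal-part coefficients $T^{m,m'}_{j,j'}$ uniquely by $q$-adic induction starting from the common $q=0$ value, forcing $\mathbf{f} = \iota_*J^{\infty}(q,\mathbf{t},-z) \in \mathcal{L}_{\mathbf{T}}$.

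The main obstacle is the precise residue bookkeeping in the forward direction: one must match the edge contribution \eqref{eq:contr_e}, the flag factor $(\eta^{j_v}_{(m_{(v,e)})})^{-1}$, and the node-smoothing denominator from \eqref{eq:stablevertex} against the claimed $\mathrm{RC}^{m,m'}_{j,j'}(\beta)$, carefully tracking the $\iota_*$ twists at broad marked points (which account for the non-strict inequalities that distinguish \eqref{eq:contr_e} from the naive edge contribution \eqref{eq:stable_edge}) as well as the sign arising from the difference between the $-z$ in $\mathbf{f} = \iota_*J^{\infty}(q,\mathbf{t},-z)$ and the $+z$ appearing in the residue condition. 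Once this identification is in place, the converse is a standard $q$-adic uniqueness argument, closely paralleling the analogous step in \cite{ClaRo} and modified only by the broad-twist at the flags; no further geometric input is required.
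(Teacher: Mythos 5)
Your proposal follows essentially the same route as the paper: the forward direction via $\mathbf{T}$-localization decomposed by whether the last marked point sits on a stable vertex or on an unstable valence-two vertex (yielding the expansion \eqref{eq:cone}, with property (2) recording edge removal), and the converse via partial fractions plus a $q$-adic recursion that pins down the pole data once the polynomial-in-$z$ part $\mathbf{t}(z)$ is fixed. The only slight difference is cosmetic: the paper takes $\mathbf{t}(z)$ directly as the undetermined $z$-polynomial part in the partial-fractions form (the regular part of the $z^{-1}$-expansion), rather than extracting it from the $z=0$ condition (3) as you phrase it, but the logic is the same.
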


Equipped with Proposition~\ref{prop:conechar}, all that remains in order to prove Theorem~\ref{thm:1} is to verify that $\iota_*J^{\epsilon}(q,-z)$ satisfies conditions (1), (2), and (3).

\begin{proof}[Proof of Theorem~\ref{thm:1}]
Throughout, we write $\mathbf{f}=\iota_*J^{\epsilon}(q,-z)$ as in \eqref{eq:iotaJ}.   We first prove that $\mathbf{f}$ satisfies condition (1) of Proposition~\ref{prop:conechar}.  The contribution of the unstable terms to $\mathbf{f}_{j,m}$ is
\begin{equation}
\label{eq:unstjm}
-\frac{z}{d}\sum_{\substack{\beta \leq 1/\epsilon \\ \left\langle \frac{\beta+1}{d}\right\rangle = m}}q^{\beta} \left(\frac{\prod_{i=1}^M\prod_{\substack{0 \leq b < \frac{w_i}{d}(\beta+1)\\ \langle b \rangle = \langle w_i m \rangle}} (bz - w_i \frac{\alpha_j}{d})}{\prod_{k=1}^N\prod'_{\substack{0 \leq b \leq \beta \\ \langle b \rangle = 0}} (-bz + \alpha_j-\alpha_k)}\right),
\end{equation}
which is manifestly a rational function of $z$ with the prescribed poles.  The contribution of the stable terms to $\mathbf{f}_{j,m}$ can be calculated by localization, with all contributing graphs having the marked point on a vertex $v$ with $j_v = j$. As in \eqref{eq:cone}, there are two types of graphs, depending on whether $v$ is unstable or stable. If $v$ is unstable with adjacent edge $e$, then the graph contribution is rational in $z$ with pole at $z=\frac{\alpha_{j}-\alpha_{j'}}{\beta_e}$. If $v$ is stable, then the graph contribution is polynomial in $z^{-1}$ (because $\psi$ is nilpotent on stable vertices). This verifies condition (1).


We next prove that $\mathbf{f}$ satisfies condition (2).  We begin with unstable terms \eqref{eq:unstjm}, for which one can calculate directly that the residue at $z= \frac{\alpha_{j} - \alpha_{j'}}{\beta_e}$ of $\mathbf{f}_{j,m}$ equals
\begin{equation}
\label{eq:residue}
\frac{\alpha_{j} - \alpha_{j'}}{d\beta_e^2} \sum_{\substack{\beta_e\leq\beta \leq 1/\epsilon \\ \left\langle \frac{\beta+1}{d}\right\rangle = m}}q^{\beta} \left(\frac{\prod_{i=1}^M\prod_{\substack{0 \leq b < \frac{w_i}{d}(\beta+1)\\ \langle b \rangle = \langle w_i m \rangle}} (b\frac{\alpha_{j}-\alpha_{j'}}{\beta_e} - w_i \frac{\alpha_j}{d})}{\prod_{k=1}^N\prod'_{\substack{0 \leq b \leq \beta \\ \langle b \rangle = 0}} (b\frac{\alpha_{j'}-\alpha_{j}}{\beta_e} + \alpha_j-\alpha_k)}\right).
\end{equation}
The evaluation of the unstable terms in $\mathbf{f}_{j',-m'}$ at $z=\frac{\alpha_{j}-\alpha_{j'}}{\beta_e}$ equals
\begin{equation}
\label{eq:evaluation}
\frac{\alpha_{j'} - \alpha_{j}}{d\beta_e} \sum_{\substack{\beta_{v'} \leq 1/\epsilon \\ \left\langle \frac{\beta_{v'}+1}{d}\right\rangle = -m'}}q^{\beta_{v'}} \left(\frac{\prod_{i=1}^M\prod_{\substack{0 \leq b < \frac{w_i}{d}(\beta_{v'}+1)\\ \langle b \rangle = \langle -w_i m' \rangle}} (b\frac{\alpha_{j}-\alpha_{j'}}{\beta_e} - w_i \frac{\alpha_{j'}}{d})}{\prod_{k=1}^N\prod'_{\substack{0 \leq b \leq \beta_{v'} \\ \langle b \rangle = 0}} (b\frac{\alpha_{j'}-\alpha_{j}}{\beta_e} + \alpha_{j'}-\alpha_k)}\right).
\end{equation}
By shifting the index $\beta_{v'}$ in \eqref{eq:evaluation} to $\beta=\beta_e+\beta_{v'}$, one checks directly that
\[
\eqref{eq:residue} = -q^{\beta_e} \text{RC}^{m,m'}_{j,j'}(\beta_e) \cdot \eqref{eq:evaluation} \mod \{q^\beta \; | \;\beta>1/\epsilon\}.
\]
The right-hand side of this equation has nontrivial coefficients of $q^\beta$ with $1/\epsilon<\beta\leq 1/\epsilon+\beta_e$. One checks that these correspond to the stable contributions to the residue at $z= \frac{\alpha_{j} - \alpha_{j'}}{\beta_e}$ of $\mathbf{f}_{j,m}$ coming from graphs with a single edge $e$ connecting two unstable vertices $v$ and $v'$ with the marked point on $v$.

For the remaining stable terms, the verification of condition (2) is again by localization, where contributing graphs have nonzero residue at $z= (\alpha_{j} - \alpha_{j'})/\beta_e$ only if the marked point lies on an unstable vertex $v$ with $j_v = j$, such that the unique edge $e$ adjacent to $v$ has degree $\beta_e$ and meets the rest of the graph at a vertex $v'$ with $j_{v'} = j'$.  The contribution of such a graph $\Gamma$ to the correlator
\[
\left\langle \frac{\mathbf{1}^j_{(-m)}}{-z-\psi_1}\right\rangle^{\epsilon}_{0,1,\beta}
\]
can be expressed as
\[\text{Contr}_{\Gamma} = \frac{1}{-z+\frac{\alpha_j-\alpha_{j'}}{\beta_e}}\cdot \text{RC}^{m,m'}_{j,j'}(\beta_e) \cdot \text{Contr}_{\Gamma'},\]
where $\Gamma'$ is the graph obtained from $\Gamma$ by omitting the edge $e$ and $\text{Contr}_{\Gamma'}$ is the contribution of $\Gamma'$ to the correlator
\[\left\langle \frac{\mathbf{1}^{j'}_{(m')}}{\frac{\alpha_j-\alpha_{j'}}{\beta_e}-\psi_1}\right\rangle^{\epsilon}_{0,1,\beta-\beta_e}.\]
Summing over all possible graphs $\Gamma$ completes the verification of condition (2).

Finally, we prove that $\mathbf{f}$ satisfies condition (3).  Let 
\[
\tau_j(z)=\sum_\Gamma\text{Contr}_\Gamma,
\]
where, as above, $\Gamma$ is a graph where the marked point lies on an unstable vertex $v$ with $j_v = j$ and $\text{Contr}_\Gamma$ denotes the contribution to $\mathbf{f}_j$. The sum of all contributions to $f_j$ from graphs where the marked point is on a stable vertex $v$ with $j_v=j$ can then be written as
\[
\sum_{\substack{\beta > 1/\epsilon\\ n,m}}\frac{q^{\beta}}{n!} \left\langle \tau_j(\psi_1) \cdots \tau_j(\psi_n) \cdot \frac{\mathbf{1}^j_{(-m)}}{-z-\psi_{n+1}} \right\rangle^{\epsilon,j}_{0,n+1,\beta} \iota_*(\mathbf{1}^j_{(-m)})^\vee.
\]
The unstable contributions to $\mathbf{f}_j$, on the other hand, are exactly the unstable contributions to $\iota_*J^{\epsilon,\text{tw}_j}(q,-z)$.  It follows that
\[
\mathbf{f} = \iota_*J^{\epsilon,j}(q, \tau_j(z), -z),
\]
which, by Corollary \ref{cor:vertexJWC}, equals $\iota_*J^{\infty,j}(q,\tau_j(z) + z \mathbf{1} + [J^{\epsilon,\text{tw}_j}]_+(q,-z),-z)$ and hence lies on $\mathcal{L}^{\infty,j}$.  This completes the verification of condition (3) and hence the proof of Theorem~\ref{thm:1}.
\end{proof}

\subsection{Proof of Theorem~\ref{thm:2}}

We now prove the wall-crossing theorem for virtual cycles:
\[
\sum_{\beta} q^\beta  [Z^{\epsilon}_{0,n,\beta}]^{\vir} =\sum_{\beta_0, \beta_1, \ldots, \beta_k} \frac{q^{\beta_0}}{k!} b_{\bbeta*}c_* \left(\prod_{i = 1}^k q^{\beta_i} \ev_{n + i}^*(\mu_{\beta_i}^\epsilon(-\psi_{n + i})) \cap [Z^{\infty}_{0,n+k,\beta_0}]^{\vir} \right),
\]
which is an application of the localization formula on both sides. 

\begin{proof}[Proof of Theorem~\ref{thm:2}]
By localization on the space $Z^{\epsilon}_{0,n,\beta}$ and the calculations of Section \ref{subsec:loccontr}, the left-hand side of \eqref{eq:mainWC} can be expressed as
\begin{equation}
\label{eq:LHS}
\sum_{\Gamma}\text{Contr}_\Gamma^{\text{LHS}}=\sum_{\Gamma} \prod_{v \in V(\Gamma)} q^{\beta_v}\text{Contr}_{\Gamma}(v)\prod_{(v,e) \in F(\Gamma)}\text{Contr}_\Gamma(v,e) \prod_{e \in E(\Gamma)} q^{\beta_e}\text{Contr}_{\Gamma}(e),
\end{equation}
where the sum is over localization graphs for the moduli spaces $Z^{\epsilon}_{0,n,\beta}$ for all $\beta$.  Let $v$ be a vertex of a localization graph $\Gamma$, and for convenience, set
\[\text{Contr}_{\Gamma}^E(v) :=\prod_{e \in E_v} \frac{\ev_e^*\left(\one^{j_v}_{(m_{(v,e)})}\right)}{\frac{\alpha_{j_{v}}-\alpha_{j_{v'}}}{\beta_e} - \psi_{(v,e)}}.\]
If $v$ is a stable vertex, then equation \eqref{eq:stablevertex} and Theorem~\ref{thm:vertexWC} together imply that
\begin{align*}
\text{Contr}_{\Gamma}(v) &=\text{Contr}_{\Gamma}^E(v) \cap [\M^{1/d,\epsilon}_{0,\text{val}(v), \beta_v}]^{\vir,{j_v}}_{\mathbf{T}}\\
&=\text{Contr}_{\Gamma}^E(v) \cap \hspace{-0.6cm} \sum_{\substack{k\\ \beta_1 + \cdots + \beta_k = \beta_v}} \hspace{-0.6cm}\frac{1}{k!} b_{\vec{\beta}*} c_*\left( \prod_{i=1}^k \ev_{\text{val}(v)+i}^*(\nu_{\beta_i}^{\epsilon,j_v}(-\psi_{\text{val}(v)+i})) \cap [\M^{1/d,\infty}_{0,\text{val}(v)+k,0}]^{\vir,{j_v}}_{\mathbf{T}} \right).
\end{align*}
By Remark \ref{rmk:unstable}, this equation holds even when $v$ is an unstable vertex. The condition $n>0$ is important here, as it implies that $\text{val}(v)>0$ at every vertex $v$, which in turn allows us to apply Theorem \ref{thm:vertexWC} at every vertex.

Let $i_{j}: \{P_j\} \hookrightarrow X$ be the inclusion of the $j$th $\mathbf{T}$-fixed point.  Then $J^{\epsilon,{j_v}}$ has the same unstable terms as $i_{j_v}^*J^{\epsilon}=J^{\epsilon}_{j_v}$, and hence 
\[\nu^{\epsilon,j_v}_{\beta_i}(z) = i_{j_v}^*\mu^{\epsilon}_{\beta_i}(z).\]
It follows that $\text{Contr}_{\Gamma}(v)$ equals
\begin{align*}
&\text{Contr}_{\Gamma}^E(v) \cap \hspace{-0.5cm} \sum_{\substack{k \\ \beta_1 + \cdots + \beta_k = \beta_v}} \hspace{-0.5cm}\frac{1}{k!} b_{\vec{\beta}*} c_*\left( \prod_{i=1}^k \ev_{\text{val}(v)+i}^*(i_{j_v}^*\mu^{\epsilon}_{\beta_i}(-\psi_{\text{val}(v)+i})) \cap [\M^{1/d,\infty}_{0,\text{val}(v)+k,0}]^{\vir,{j_v}}_{\mathbf{T}} \right)\\
=&\text{Contr}_{\Gamma}^E(v) \cap \hspace{-0.5cm} \sum_{\substack{k \\ \beta_1 + \cdots + \beta_k = \beta_v}} \hspace{-0.5cm} \frac{1}{k!} b_{\vec{\beta}*} c_*\left( \prod_{i=1}^k \ev_{\text{val}(v)+i}^*\bigg([q^{\beta_i}][J^{\infty}_{j_v}]_+\Big(q,[J^{\epsilon}]_+(q,-z) + z\one,-\psi_{\text{val}(v)+i}\Big)\bigg)\right.\\
&\hspace{2cm} \cap[\M^{1/d,\infty}_{0,\text{val}(v)+k,0}]^{\vir,j_v}_{\mathbf{T}} \bigg),
\end{align*}
where the equality is an application of Theorem~\ref{thm:1} and $[q^{\beta_i}]$ again denotes the coefficient of $q^{\beta_i}$ in a power series in $q$.

Now, we have
\[
[J^{\infty}_{j_v}]_+(q, \mathbf{t}(z),z) = z\one + \mathbf{t}(-z) + \sum_{\Lambda} \text{Contr}_{\Lambda}(J^{\infty}(q,\mathbf{t}(z),z)),
\]
where the sum is over localization graphs $\Lambda$ for the moduli spaces $Z^{\infty}_{0,n+1,\beta}$ such that the last marked point lies on an unstable vertex $w$ with $j_w = j_v$; here $\text{Contr}_{\Lambda}(J^{\infty}(q,\mathbf{t}(z),z)$ denotes the contribution of $\Lambda$ to the localization expression for a stable term of $J^{\infty}(q,\mathbf{t}(z),z)$.  Thus,
\begin{align}
\label{eq:Jjv}
\nonumber &[q^{\beta_i}][J^{\infty}_{j_v}]_+\Big(q,[J^{\epsilon}]_+(q,-z) + z\one,-\psi_{\text{val}(v)+i})\\
= &i_{j_v}^*\mu^{\epsilon}_{\beta_i}(-\psi_{\text{val}(v)+i}) + \sum_{\Lambda} [q^{\beta_i}]\text{Contr}_{\Lambda}(J^{\infty}(q,[J^{\epsilon}]_+(q,-z) + z\one,-\psi_{\text{val(v)}+i}))=:T_{j_v}^{\beta_i},
\end{align}
where we can think of $T_{j_v}^{\beta_i}$ as the $q^{\beta_i}$-coefficient of a `tail' emanating from the vertex $v$. We then write
\begin{align*}
\text{Contr}_{\Gamma}(v) &=\text{Contr}_{\Gamma}^{E}(v) \; \cap\hspace{-.4cm}\sum_{\substack{ k \\ \beta_1 + \cdots + \beta_k = \beta_v}} \hspace{-0.5cm} \frac{1}{k!} b_{\vec{\beta}*} c_*\left( \prod_{i=1}^k \ev_{n+i}^*\left(T_{j_v}^{\beta_i} \right)\cap[\M^{1/d,\infty}_{0,\text{val}(v)+k,0}]^{\vir,j_v}_{\mathbf{T}} \right).
\end{align*}
By the localization formula,
\begin{align*}
\sum_{\substack{ k \\ \beta_1 + \cdots + \beta_k = \beta_v}} \hspace{-0.5cm} \frac{1}{k!}&\prod_{i=1}^k \ev_{n+i}^*\left(T_{j_v}^{\beta_i} \right)\cap[\M^{1/d,\infty}_{0,\text{val}(v)+k,0}]^{\vir,j_v}_{\mathbf{T}}\\
&=\hspace{-0.5cm}\sum_{\substack{ m \\ \beta_1 + \cdots + \beta_m \leq \beta_v}}\hspace{-0.5cm}\frac{1}{m!}\sum_{\Omega}\text{Contr}_{\Omega}\left(\prod_{i = 1}^m \ev_{\text{val}(v) + i}^*(\mu_{\beta_i}^\epsilon(-\psi_{\text{val}(v) + i}))\right),
\end{align*}
where the second sum in the right-hand side is over localization graphs $\Omega$ for the moduli spaces $Z^\infty_{0,\text{val}(v)+m,\beta_v-\sum_{i=1}^m\beta_i}$ such that (at least) the first $\text{val}(v)$ marked points lie on a distinguished vertex $w$ with $j_w=j_v$, and such that each of the $k$ connected components $T$ in $\Omega\setminus\{w\}$ satisfies $\beta(T)+\sum_{i\in T}\beta_i\leq 1/\epsilon$, so that the entire fixed locus $F_\Omega$ maps to $\M^{1/d,\epsilon}_{0,\text{val}(v), \beta_v}$ upon applying the map $b_{\vec\beta}\circ c$. Furthermore,
\[
\text{Contr}_\Omega\left(\prod_{i = 1}^m \ev_{\text{val}(v) + i}^*(\mu_{\beta_i}^\epsilon(-\psi_{\text{val}(v) + i}))\right)\in H_*(\M^{1/d,\epsilon}_{0,\text{val}(v)+k, 0})
\]
denotes the result of taking the localization contribution of $\Omega$ to the class
\[
\prod_{i = 1}^m \ev_{\text{val}(v) + i}^*(\mu_{\beta_i}^\epsilon(-\psi_{\text{val}(v) + i})) \cap [Z^{\infty}_{0,\text{val}(v)+m,\beta_v-\sum_{i=1}^m\beta_i}]^{\vir}
\]
and integrating along all vertex moduli spaces except the distinguished vertex. Since integrating along all of the vertex moduli except the distinguished one and then replacing the attaching node with a basepoint is the same as applying the map $b_{\vec\beta*}c_*$, this implies that 
\[
\text{Contr}_{\Gamma}(v)=\text{Contr}_{\Gamma}^{E}(v) \; \cap \hspace{-.5cm}\sum_{\substack{ m \\ \beta_1 + \cdots + \beta_m \leq \beta_v}}\hspace{-0.5cm}\frac{1}{m!}b_{\vec\beta*}c_*\sum_{\Omega}\text{Contr}_{\Omega}\left(\prod_{i = 1}^m \ev_{\text{val}(v) + i}^*(\mu_{\beta_i}^\epsilon(-\psi_{\text{val}(v) + i}))\right).
\]
Applying this procedure at each vertex of $\Gamma$, it follows that
\[
\text{Contr}_\Gamma^{\text{LHS}}= \hspace{-.5cm}\sum_{\substack{ m \\ \beta_1 + \cdots + \beta_m \leq \sum \beta_v}}\hspace{-0.5cm}\frac{1}{m!}b_{\vec\beta*}c_*\sum_{\Omega}\text{Contr}_{\Omega}^{\text{RHS}}
\]
where the second sum is over localization graphs $\Omega$ for the moduli spaces $Z^\infty_{0,n+m,\beta}$ such that $F_\Omega$ maps to $F_\Gamma$ upon applying the map $b_{\vec\beta}\circ c$, and $\text{Contr}_{\Omega}^{\text{RHS}}$ denotes the contribution to 
\[
\prod_{i = 1}^m \ev_{n + i}^*(\mu_{\beta_i}^\epsilon(-\psi_{n + i})) \cap [Z^{\infty}_{0,n+m,\beta_0}]^{\vir}.
\]
Summing over all localization graphs $\Gamma$ on the left-hand side is equivalent to summing over all localization graphs $\Omega$ on the right-hand side, completing the proof that the two sides of \eqref{eq:mainWC} are equal.
\end{proof}

\bibliographystyle{abbrv}
\bibliography{biblio}

\end{document}